\theoremstyle{plain}
\newtheorem{theorem}{Theorem}
\newtheorem{lemma}{Lemma}[section]
\newtheorem{prop}{Proposition}[section]
\theoremstyle{remark}
\newtheorem{exa}{Example}[section]
\newtheorem{remark}{Remark}
\newtheorem{ques}{Question}
\newcommand{\cS}{\mathcal{S}}
\newcommand{\cM}{\mathcal{M}}
\newcommand{\RR}{{\mathbb{R}}}
\newcommand{\WW}{{\mathcal{W}}}
\newcommand{\RRx}{\RR\langle x \rangle}
\newcommand{\gtupn}{(\RR^{n\times n}_{sym})^g}
\def\dd2{\frac{d}{2}}
\def\dm2{\frac{d-1}{2}}
\def\ll2{\frac{\ell}{2}}
\newcommand{\ncm}{\,}
\newcommand{\eeq}{\notag}
\newcommand{\summ}[2]{\underset{#1}{\overset{#2}{\sum}}}
\newcommand{\dird}{D} 
\DeclareMathOperator{\nchess}{NCHes}
\newcommand{\lap}{Lap}
\DeclareMathOperator{\re}{Re}
\DeclareMathOperator{\im}{Im}
\newcommand{\h}{\gamma}
\newcommand{\hd}{\h^d}
\newcommand{\hdr}{\re(\h^d)}
\newcommand{\hdi}{{\im(\h^d)}}
\newcommand{\hdl}{{\h^{d-1}}}
\newcommand{\hdrl}{{\re(\h^{d-1})}}
\newcommand{\hdil}{{\im(\h^{d-1})}}
\newcommand{\ncder}[3]{\dird[#1,#2,#3]}
\newcommand{\ncl}[2]{\lap[#1,#2]}
\title{Non-Commutative Harmonic and Subharmonic Polynomials}
\author{J. William Helton, Daniel P. McAllaster and
Joshua A. Hernandez \\ \\
UCSD Department of Mathematics \\
La Jolla, CA. 92093\\ \\
}
\begin{document}

\maketitle

\begin{abstract}
The paper introduces a notion of the Laplace operator of a polynomial
$p$ in noncommutative variables $x=(x_1, \ldots, x_g)$.  The Laplacian
$\lap[p,h]$ of $p$ is a polynomial in $x$ and in a noncommuting
variable $h$. When all variables commute we have $\lap[p,h]= h^2
\Delta_x p$ where $\Delta_x p$ is the usual Laplacian.  A symmetric
polynomial in symmetric variables will be called {\it harmonic} if
$\lap[p,h]=0$ and {\it subharmonic} if the polynomial $q(x,h):=
\lap[p,h]$ takes positive semidefinite matrix values whenever matrices
$X_1, \ldots, X_g, H$ are substituted for the variables $x_1, \ldots,
x_g, h$. In this paper we classify all homogeneous symmetric harmonic
and subharmonic polynomials in two symmetric variables. We find there
are not many of them: for example, the span of all such subharmonics
of any degree higher than 4 has dimension 2 (if odd degree) and 3 (if
even degree). Hopefully, the approach here will suggest ways of
defining and analyzing other partial differential equations and
inequalities.
\end{abstract}

\newpage

\section{Introduction}

In the introduction we shall make essential definitions, then state
our main results. The rest of the paper proves them.

\subsection{Definitions}

\subsubsection{Non-Commutative Polynomials}

A non-commutative monomial $m$ of degree $d$ on the free variables
$(x_1,\,\ldots,\,x_g)$ is a product $x_{a_1}x_{a_2}\cdots x_{a_d}$ of
these variables, corresponding to a unique sequence of $a_i$ of
nonnegative integers, $1\le a_i\le g$.  We abbreviate this $m = x^w$,
where $w$ is the $d$-tuple $(a_1,\ldots,a_d)$. The set of all
monomials in $(x_1,\,\ldots,\,x_g)$ is denoted as $\cM$ and the set of
indexes $w$ is denoted $\WW$.
Some notation is:

\begin{center}
\begin{tabular}{ll}
$|w|=d$& the length of $w$\\
$(w)_i=a_i$ & the $i^{\text{th}}$ entry of $w$ \\
$w^T=(a_d,\ldots,a_1)$& the transpose of $w$ \\
$\phi= ()$ & the empty word (word of length zero)
\end{tabular}
\end{center}

The space of non-commutative polynomials $p(x) = p(x_1,\ldots,x_g)$
with real coefficients is denoted $\RRx$ and we express $p$ as
\begin{equation}
p(x)  = \summ{m\,\in\, \cM}{}A_{m}\,m.
\eeq\end{equation}
An example of a non-commutative polynomial is \
$$p(x) = p(x_1,x_2)
 = x_1^2\ncm x_2\ncm x_1 + x_1\ncm x_2\ncm x_1^2 + x_1
\ncm x_2 - x_2\ncm x_1 + 7$$ \
(in commutative variables, this would be equivalent to $2x_1^{3}x_2 + 7$).

The transpose of a monomial $m=x^w$ is defined to be $m^T=x^{w^T}$.
The transpose of a polynomial $p$, denoted $p^T$, is defined by
$p(x)  = \summ{m\,\in\, \cM}{}A_{m}\,m^T$
and has the following properties:
\begin{quote}
(1) $(p^T)^T = p$ \\
(2) $(p_1 + p_2)^T = p_1^T + p_2^T$ \\
(3) $(\alpha p)^T = \alpha p^T$ \qquad\qquad ($\alpha \in
\RR$)\\
(4) $(p_1\ncm p_2)^T = p_2^T\ncm p_1^T$.
\end{quote}
In this paper, we shall consider primarily
 polynomials in symmetric variables.
That is, we consider variables $x_i$ where $x_i^T = x_i$.
Then monomials satisfy
\
$(x_{a_1}\ncm \ldots\ncm x_{a_{d-1}})^T
=
x_{a_{d-1}}\ncm\ldots\ncm x_{a_1} $,
which in other notation is
$(x^w)^T = x^{w^T}.$
Symmetric (or self-adjoint) polynomials are those that are equal to their
transposes.

\subsubsection{Evaluating Noncommutative Polynomials}
\label{sec:openG2}

Let $\gtupn$ denote the set of $g$-tuples
$(X_1,\ldots,X_g)$ of real symmetric $n\times n$ matrices.
We shall be interested in evaluating a
polynomial $p(x)=p(x_1,\ldots,x_g)$
that belongs to $\RRx$ at a
tuple $X=(X_1,\dots,X_g)\in(\mathbb{R}^{n\times n}_{sym})^g$.
In this
case $p(X)$ is also an $n\times n$ matrix and
the involution on $\RRx$
that was introduced earlier is compatible
with matrix transposition, i.e.,
$$
p^T(X)=p(X)^T,
$$
where $p(X)^T$ denotes the transpose of the  matrix  $ p(X)$.
When $X\in \gtupn$ is substituted into $p$
the constant term  $p(0)$ of $p(x)$  becomes $p(0) I_n$.
  Thus,
for example,
   $$p(x)= 3+x_1^2+5x_2^3  \  \ \implies \ \  p(X)= 3 I_n+X_1^2+5X_2^3.$$

A symmetric polynomial $p \in \RRx$ is {\bf matrix positive}
if $p(X)$ is a positive semidefinite matrix for each tuple
$X=(X_1,\dots,X_g)\in (\mathbb{R}^{n\times n}_{sym})^g$.
We emphasize that throughout this paper, unless otherwise noted,
$x_1, x_2, \ldots, x_n$ stand for variables and
$X_1, X_2, \ldots, X_n$ stand for matrices (usually symmetric).

\subsubsection{Non-Commutative Differentiation}

For our non-commutative purposes,
we take directional derivatives in $x_i$
with regard to an indeterminate direction parameter $h$.
\begin{equation}
\dird[p(x_1,\ldots,x_g),x_i,h]:= \frac{d}{dt}[p(x_1, \ldots, (x_i+th),
\ldots, x_g)]_{|_{t=0}}.
\end{equation}
We say that this is the directional derivative of
$p(x) = p(x_1,\ldots,x_g)$
in $x_i$ in the direction $h$.
Note it is linear in  $h$.
For a detailed formal definition see \cite{HMVjfa},
for more examples see \cite{CHSY03}.
\begin{exa} The directional derivative
\end{exa}
\begin{tabularx}{\linewidth}{X}
$\dird[x_1^2\ncm x_2,x_1,h] = \frac{d}{dt}[(x_1+th)^2x_2]_{|_{t=0}}$\\
\quad\qquad\qquad\qquad$= \frac{d}{dt}[x_1^2\ncm x_2+th\ncm x_1\ncm
x_2+tx_1\ncm h\ncm x_2+t^2h^2\ncm x_2]_{|_{t=0}}$\\
\quad\qquad\qquad\qquad$
= [h\ncm x_1\ncm x_2 + x_1\ncm h\ncm x_2 + 2th^2\ncm
x_2]_{|_{t=0}}$\\
\quad\qquad\qquad\qquad$= h\ncm x_1\ncm x_2 + x_1\ncm h\ncm x_2$.\\
\end{tabularx}

As this example shows, the directional derivative of $p$ on $x_i$
in the direction $h$ is the sum of the terms produced by replacing
one instance of \mbox{$x_i$ with $h$.}

\begin{lemma}
The directional derivative
of  NC polynomials is linear,
\begin{equation}
\dird[a\,p(x) + b\,q(x),x_i,h]
= a\,\dird[p(x),x_i,h] + b\,\dird[q(x),x_i,h]
\eeq\end{equation}
and respects transposes
\begin{equation}
\dird[p(x)^T,x_i,h]
= \dird[p(x),x_i,h]^T.
\eeq\end{equation}
\end{lemma}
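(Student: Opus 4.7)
The linearity statement is essentially a formal consequence of the definition. Since $\dird[\,\cdot\,, x_i, h]$ is the composition of the substitution $x_i \mapsto x_i + t h$ (which is $\RR$-linear in the polynomial) with $\frac{d}{dt}\big|_{t=0}$ (which is $\RR$-linear on polynomials in $t$ with coefficients in $\RRx$ extended by $h$), the whole map is $\RR$-linear. I would just unwind one step of the definition to make this explicit and move on; this half is not where any real work lies.

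The transpose identity is the substantive part. The plan is to first establish the commutation
\begin{equation*}
p(x_1,\ldots, x_i + th,\ldots, x_g)^T \;=\; p^T(x_1,\ldots, x_i + th,\ldots, x_g),
\end{equation*}
viewing both sides as polynomials in $x, h, t$ with $h$ a symmetric indeterminate and $t$ a commuting scalar. Once this is in hand, I differentiate both sides at $t = 0$. Because transposition on $\RR\langle x, h\rangle[t]$ is $\RR$-linear in the coefficients of $t$, and $\frac{d}{dt}\big|_{t=0}$ merely extracts the coefficient of $t^1$, these two operations commute. Thus the derivative of the left side is $\dird[p, x_i, h]^T$ and the derivative of the right side is $\dird[p^T, x_i, h]$, which is exactly what we want.

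To prove the commutation identity, I would reduce to a single monomial $m = x_{a_1}\cdots x_{a_d}$ by linearity of transposition and of substitution. Writing $y_j = x_j$ for $j \neq i$ and $y_i = x_i + th$, all $y_j$ are symmetric (using $h^T = h$ and that $t$ is a central scalar), so
\begin{equation*}
m(y)^T \;=\; (y_{a_1}\cdots y_{a_d})^T \;=\; y_{a_d}^T\cdots y_{a_1}^T \;=\; y_{a_d}\cdots y_{a_1} \;=\; m^T(y),
\end{equation*}
where the last equality uses the definition $m^T = x^{w^T}$. Extending by linearity in the coefficients $A_m$ of $p$ finishes the step.

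The only real obstacle is bookkeeping: one must be careful that $h$ is treated as a symmetric variable and $t$ as a commuting scalar indeterminate, so that taking transposes does not scramble the powers of $t$ or introduce a spurious $h^T$. Once that convention is fixed, both parts of the lemma reduce to the observation that substitution, transposition, and $\RR$-linear differentiation in a formal parameter all commute with each other in the appropriate sense.
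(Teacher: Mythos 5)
The paper offers no proof here beyond the single word ``Straightforward,'' so there is nothing to compare your argument against; your job is just to check that it actually is straightforward, and it is. Your proof is correct: substitution $x_i \mapsto x_i + th$ commutes with transposition because it sends symmetric generators to symmetric elements, and $\frac{d}{dt}\big|_{t=0}$ (extraction of the $t$-coefficient) commutes with transposition because transposition is $\RR$-linear and fixes the central scalar $t$; composing these two commutations gives the claimed identity. One can reach the same conclusion a shade more directly from the paper's own combinatorial description of the derivative -- $\dird[m,x_i,h]$ is the sum of monomials obtained from $m$ by replacing one occurrence of $x_i$ with $h$ -- since reversing a word and then replacing one $x_i$ by $h$ produces exactly the reversals of what you get by replacing first and reversing second, so the two sides agree monomial by monomial. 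Your more structural route costs a bit more setup but would survive unchanged under any symmetric substitution for $x_i$, not just $x_i + th$; either is a perfectly good expansion of the paper's ``straightforward.''
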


\proof \ Straighforward. \qed

\subsubsection{Non-Commutative  Laplacian and Subharmonicity}

  The Laplacian of a NC polynomial $p(x)$ is
 defined as:
\begin{align}
\lap[p, h] &:= \summ{i=1}{g}\dird[\dird[p(x),
x_i,h],\ x_i,h] \label{eq:dirdy}\\
&=
\summ{i=1}{g}\frac{d^2}{dt^2}
[p(x_1,\ldots,(x_i+th),\ldots,x_g)]_{|_{t=0}}.
\end{align}
Our notation is slightly inconsistent (but has advantages)
in that the single letter $x$
stands for $g$ variables  $x_1, \ldots, x_g$
while $h$ is a single variable.
Note that $\lap$ is linear in $h$.
An NC  polynomial is called {\bf harmonic}
if its Laplacian is zero, and {\bf subharmonic}
if its Laplacian is matrix-positive and
{\bf purely subharmonic} \index{purely subharmonic}
 is used to describe a polynomial
which is subharmonic but not harmonic -
that is, having a nonzero, matrix-positive Laplacian.

Specialization of  $\lap[p,h]$,
to commutative variables, is
$h^2
\Delta \bigl(p \bigr)
$
where $\Delta \bigl(p\bigr)$ is the standard Laplacian, namely,
$
\Delta \bigl(p \bigr) := \summ{i=1}{g}\partial_{x_ix_i}p(x).
$
Here $p: \RR^n \to \RR$.

\subsection{Classification of Harmonics and Subharmonics in Two Variables}

For our special homogeneous polynomials on two variables, define
\begin{gather}\label{eq:gammadef}
\h := x_1+i\,x_2
\end{gather}
where $i$ is the imaginary number.

\begin{theorem}
\label{thm:main}

The homogeneous noncommutative polynomials in two
symmetric variables which are
\begin{description}
\item[(1a.)] \ harmonic of degree $d>2$
are exactly the linear combinations of
\begin{equation}
\re(\hd) \quad \text{and} \quad \im(\hd),
\eeq\end{equation}

\item[(1b.)] \ harmonic of degree $d=2$
are exactly the linear combinations of
\begin{equation}
\re(\hd) \quad \text{and} \quad \im(\hd) \quad and \quad x_1 x_2,
\eeq\end{equation}
\hspace{.5in}
(note this includes $x_2x_1$),

\item[(2a)] \ subharmonic  of degree $2d$ with $d>2$ ,
are exactly the linear combinations:
\begin{align}
c_0& [\re(\h^d)]^2\, +\, c_1\re(\h^{2d})\, +\, c_2\im(\h^{2d})  \notag \\
&= c_0 [\im(\h^d)]^2\,+\,(c_0+c_1)\,\re(\h^{2d})\,+\,c_2\im(\h^{2d})
\label{eq:lincomb}
\end{align}
\hspace{.5in} where $c_0 \geq 0$,
\vspace{1ex}

\item[(2b)] \ symmetric  subharmonics  of degree $4$,
are exactly the linear combinations:
\begin{eqnarray}
f&=&B_1(x_1^4-x_1^2\ncm x_2^2-x_2^2\ncm x_1^2+x_2^4)
+B_2(x_1^3\ncm x_2+x_2\ncm x_1^3-x_2\ncm x_1\ncm x_2^2-x_2^2
\ncm x_1\ncm x_2)\\
&&+B_3(x_1^2\ncm x_2\ncm x_1 + x_1\ncm x_2\ncm x_1^2 - x_1
\ncm x_2^3-x_2^3\ncm x_1 )
+B_4(x_1\ncm x_2\ncm x_1\ncm x_2+x_2\ncm x_1\ncm x_2\ncm x_1)\notag\\
&&+B_5\,x_1\ncm x_2^2\ncm x_1 + B_6 x_2\ncm x_1^2\ncm x_2 \notag
\end{eqnarray}
with coefficients satisfying the inequalities:
\begin{align}
&(III)\implies(B_1 + B_6)(B_1 + B_5) >
(B_3 - B_2)^2 + (B_1 + B_4)^2 \text{ and }\\
&(I)\implies\quad B_1 + B_6> 0\quad (\text{or, equivalently}
\quad B_1 + B_5 >
0).\end{align}
\item[(2c)] \ All  subharmonics  of degree $2$ are,
$$
 A_1 \, x_1^2 + A_2 \, x_2^2  + A_3 \,x_1 x_2 + A_4 x_2 x_1
$$
\hspace{.5in}
with $A_1+A_2 \geq 0$.

\item[(3)] \ Pure subharmonics of odd degree do not exist.
\end{description}
\vspace{1ex}

Note:
all of these functions except for $x_1 x_2$
and $x_2 x_1$ in (1b) and in (2c)
are symmetric.
\end{theorem}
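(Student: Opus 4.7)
The plan is to introduce complex coordinates $\h = x_1 + ix_2$ and $\bar\h := x_1 - ix_2$ (so $x_1 = (\h+\bar\h)/2$, $x_2 = (\h-\bar\h)/(2i)$), extend scalars to $\CC$, and derive a Wirtinger-style factorization of the Laplacian. From the definition of $\dird$ one has $\dird[\h,x_1,h]=h$, $\dird[\bar\h,x_1,h]=h$, $\dird[\h,x_2,h]=ih$, $\dird[\bar\h,x_2,h]=-ih$; letting $D_\h,D_{\bar\h}$ denote the operations ``sum over all ways to replace one $\h$ (resp.\ $\bar\h$) by $h$,'' the chain rule gives $\dird[\cdot,x_1,h]=D_\h+D_{\bar\h}$ and $\dird[\cdot,x_2,h]=iD_\h-iD_{\bar\h}$ on any polynomial written in the $\h,\bar\h$ basis. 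Squaring and summing cancels the diagonal terms and yields the key identity
\[
\lap[p,h] \;=\; 2\bigl(D_\h D_{\bar\h} + D_{\bar\h} D_\h\bigr)[p],
\]
so that on a monomial $w$ in $\{\h,\bar\h\}$, $\lap$ produces (up to a factor of $4$) the sum, over unordered pairs of positions with opposite letters, of $w$ with those two positions replaced by $h$.

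Parts (1) and (3) then fall out quickly. Writing $p=\sum c_w w$ and setting $\lap[p,h]=0$, the identity above forces, for every pair of positions $i<j$ and every fixed choice of letters elsewhere, the relation $c_{w^{(1)}}+c_{w^{(2)}}=0$, where $w^{(1)},w^{(2)}$ differ only at $i,j$ by a $(\h,\bar\h)\leftrightarrow(\bar\h,\h)$ swap. A ``triangle'' argument on three positions carrying letters $\h,\bar\h,\bar\h$ (or $\bar\h,\h,\h$) yields three such constraints that together force $c_w=0$ on any mixed word; this triangle is available as soon as $d\geq 3$, so the only surviving harmonics are $\CC\h^d+\CC\bar\h^d$, giving $\re(\h^d),\im(\h^d)$ over $\RR$. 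In degree $d=2$ only single swaps are available, leaving the extra relation $c_{\h\bar\h}+c_{\bar\h\h}=0$ and producing the additional $x_1 x_2$ (with $x_2 x_1 = \im(\h^2)-x_1 x_2$). For Part (3), $\lap[p,h]$ has total degree $d$ in $(x_1,x_2,h)$, so for odd $d$ it satisfies $\lap[p,h](-X,-H) = -\lap[p,h](X,H)$; matrix-positivity then forces $\lap[p,h]\equiv 0$, i.e., $p$ is actually harmonic.

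For the subharmonic parts, the NC Leibniz rule
\[
\lap[pq,h] = \lap[p,h]\,q + p\,\lap[q,h] + 2\sum_i \dird[p,x_i,h]\,\dird[q,x_i,h]
\]
applied with $p=q=\re(\h^d)$ (harmonic by Part (1)) yields
\[
\lap\bigl[(\re(\h^d))^2,h\bigr] \;=\; P(h)\bar P(h) + \bar P(h)P(h),
\]
where $P(h) := \sum_{k=0}^{d-1} \h^k\, h\, \h^{d-1-k}$ and $\bar P$ is its $\h\leftrightarrow\bar\h$ conjugate. Since $\h$ and $\bar\h$ become Hermitian conjugates under symmetric matrix substitution, $\bar P(H)=P(H)^*$, so the right-hand side equals $P(H)P(H)^*+P(H)^*P(H)\succeq 0$, proving $[\re(\h^d)]^2$ is subharmonic. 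The identity $[\re(\h^d)]^2-[\im(\h^d)]^2=\re(\h^{2d})$ (harmonic) translates between the two forms in (\ref{eq:lincomb}). Part (2c) is immediate: $\lap[x_1x_2,h]=\lap[x_2x_1,h]=0$ while $\lap[x_1^2,h]=\lap[x_2^2,h]=2h^2$, so the Laplacian of a general degree-$2$ polynomial is $2(A_1+A_2)h^2$, matrix-positive iff $A_1+A_2\geq 0$.

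The main obstacle is the converse for (2a) and especially (2b): showing the listed polynomials exhaust all subharmonics. The plan is to put $\lap[f,h]$, expanded in the $\h,\bar\h,h$ basis, into ``middle-matrix'' form $V^* M V$, where $V$ is a fixed column of monomials linear in $h$ and $M$ is a Hermitian matrix whose entries are linear in the coefficients of $f$; matrix-positivity of $\lap[f,h]$ is then equivalent to $M\succeq 0$. For $d\geq 3$ the image (as $f$ ranges over symmetric degree-$2d$ polynomials modulo harmonics) turns out to be thin enough that its intersection with the PSD cone is the one-dimensional ray generated by the middle matrix of $P\bar P + \bar P P$, giving (2a). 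Degree $4$ is substantially harder because accidental extra positive forms appear in that low degree; one must write out the full six-parameter $B_i$ family, compute its middle matrix explicitly, and extract the conditions $(I)$ and $(III)$ by a Schur-complement analysis of that $M$.
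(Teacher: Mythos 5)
Your Wirtinger-style factorization $\lap[p,h] = 2\bigl(D_\h D_{\bar\h} + D_{\bar\h} D_\h\bigr)[p]$ (valid because the directional derivatives $\dird[\cdot,x_i,h]$ are derivations, so changing basis to $\h,\bar\h$ diagonalizes them) and the three-swap triangle argument killing every mixed $\{\h,\bar\h\}$-word of length $\geq 3$ are correct, and they give Parts (1a)--(1b) by a genuinely different route than the paper's. The paper instead proves Proposition~\ref{prop:allharmprop} by induction on degree, using Lemma~\ref{lem:lowerdeg} to peel off the leading letter $x_1$ or $x_2$ and reduce to degree $d-1$ harmonics. Your one-shot combinatorial argument is shorter; the paper's inductive machinery (right/left-neighbor decompositions) is reused elsewhere. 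Your treatments of (3), (2c), and the forward half of (2a) via $\lap\bigl[(\re\hd)^2,h\bigr]=P\bar P+\bar P P$ are correct and essentially parallel to the paper's.

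The genuine gap is the converse of Part (2a). You assert that for $d\geq 3$ the image of the middle-matrix map is ``thin enough'' that it meets the PSD cone only in the ray of $P\bar P+\bar P P$, but you give no argument, and a direct middle-matrix computation valid for all degrees $2d$ would be degree-dependent and not obviously tractable. The paper supplies the missing structural step. Lemma~\ref{lem:subToharm} shows that if $p$ is subharmonic and homogeneous of even degree $d$, then in the right-neighbor decomposition $p=\sum_{|t|=d/2}x^t p_t(x)$ every $p_t$ is harmonic: writing $\lap(p)=\sum_j L_j^T L_j$ with each $L_j$ linear in $h$ forces every monomial of $\lap(p)$ to carry exactly one $h$ in each half of its length, whereas the contribution $\sum_t x^t\lap(p_t)$ would put both $h$'s in the second half, so it cannot be cancelled and must vanish. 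A left-neighbor analogue together with the independence hypothesis yields Proposition~\ref{prop:EvenSubharmSumProdHarm}: $p=\sum_{i,j}\phi_{ij}\h_i\h_j$ over a basis $\h_1,\dots,\h_k$ of degree-$d/2$ harmonics. With $k=2$ for $d/2>2$, the product rule and Lemma~\ref{lem:sym} then give $\lap(p)=(\phi_{11}+\phi_{22})\,(P\bar P+\bar P P)$, which is matrix-positive if and only if $c_0:=\phi_{11}+\phi_{22}\geq 0$. Your sketch contains no substitute for Lemma~\ref{lem:subToharm}. For Part (2b) you propose precisely the paper's method of Section~\ref{sec:main:2b} (explicit $7\times 7$ middle matrix, $LDL^T$ and Schur-complement analysis) but do not carry out the computation, so that part also remains unfinished.
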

\proof \
Most of the remainder of this paper is focused on proving this theorem.
The proofs for the parts of the theorem are as follows:

\begin{center}

\begin{tabular}{|c|c|}\hline
Part of theorem&Section of the proof\\\hline\hline
(1a.)&\ref{sec:main:1a}\\\hline
(1b.)&\ref{sec:main:1b}\\\hline
(2a.)&\ref{sec:main:2a}\\\hline
(2b.)&\ref{sec:main:2b} \ also Remark \ref{rem:deg6} \\\hline
(2c.)&\ref{sec:main:2c}\\\hline
(3.)&\ref{sec:main:3}\\\hline
\end{tabular}

\end{center}

\medskip

\begin{remark}
The following degree 3 polynomial $p$
is unusual in that there is a region of $X_1,X_2$ where
$\lap(p)$ is positive, but $\lap(p)$ is not positive everywhere:
$$A_1\, (x_1^3 - x_1\ncm x_2^2 - x_2\ncm x_1^2)
+ A_2\, x_2\ncm x_1\ncm x_2 + A_3\,\ncm x_1\ncm x_2\ncm x_1
+ A_4\, (x_2^3 - x_1^2\ncm x_2 - x_2\ncm x_1^2)$$
For this the
\textit{region of subharmonicity} is
$(A_1\, + A_2)x_1 + (A_3 + A_4)x_2 > 0\,$  \ and
the
\textit{region of harmonicity} is
$A_1\, + A_2 = A_3 + A_4 = 0$.
Of course,
there is no homogeneous polynomial of degree three which is
  subharmonic over all values of $x_1$ and $x_2$.
\end{remark}

\subsection{Subharmonics are All Built from Harmonics}

Our second main result is
 a general fact which holds in any number of variables:
\begin{theorem}
\label{thm:SubsToSquaresOfHarms}
Assume the harmonic polynomials
homogeneous of degree $\dd2$
have a basis $\h_1, \cdots, \h_k$
with the independence property:
 there is a monomial $w_j$ in $\h_j$ which does not occur
in the other $\h_1,  \cdots, \h_k$. \
  If $p$ is a homogeneous symmetric subharmonic polynomial
 of degree even $d$, then $p$ has the form
 $$ p=  \sum_i^{finite} c_i R_i^TR_i $$
for some homogeneous harmonic functions $R_j$
of degree $\dd2$ and real numbers $c_j$.
\end{theorem}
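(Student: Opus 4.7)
Since $p$ is subharmonic, $\lap[p,h]$ is a matrix-positive noncommutative polynomial in $(x_1,\ldots,x_g,h)$. The plan is to apply Helton's noncommutative sum-of-squares theorem (every matrix-positive NC polynomial is a sum of Hermitian squares) to obtain
$$\lap[p,h] \;=\; \sum_j Q_j(x,h)^T \, Q_j(x,h),$$
and then to integrate these SOS witnesses back into harmonic polynomials. Because $\lap[p,h]$ is homogeneous of total degree $d$ in $(x,h)$ and of degree $2$ in $h$ alone, extracting homogeneous components lets us assume each $Q_j$ is homogeneous of total degree $d/2$ and linear in $h$.

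I next plan to exploit the Leibniz-rule identity
$$\lap[R^T R, h] \;=\; \lap[R,h]^T R \;+\; R^T \lap[R,h] \;+\; 2 \sum_{i=1}^{g} \dird[R,x_i,h]^T \, \dird[R,x_i,h],$$
which holds for any polynomial $R$ of degree $d/2$. For harmonic $R$ the first two terms vanish, so $\lap[R^T R,h] = 2\sum_i \dird[R,x_i,h]^T \dird[R,x_i,h]$ is automatically a sum of Hermitian squares -- this reconfirms that $R^T R$ is subharmonic whenever $R$ is harmonic, and shows that the SOS witnesses of the Laplacian of a harmonic square are rigidly the NC gradient components $\dird[R,x_i,h]$.

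The core of the argument is to invert this observation: from the abstract witnesses $Q_j$, I would reconstruct harmonic polynomials $R_j \in \operatorname{span}(\h_1,\ldots,\h_k)$ whose NC gradient components account, after an appropriate linear change of basis across the indices, for the $Q_j$'s. The independence-property hypothesis is essential here: each distinguishing monomial $w_\ell$ lets one read off the coefficient of $\h_\ell$ in any candidate harmonic antiderivative from a single coefficient of the target, making the integration $Q_j \mapsto R_j$ well-defined. Once that is done, $p - \sum_j c_j R_j^T R_j$ is symmetric with vanishing Laplacian -- i.e., a symmetric harmonic polynomial of degree $d$ -- and this residue is itself in the span of the $\h_i^T \h_j$'s via product identities such as $\re(\h^d) = \re(\h^{d/2})^T \re(\h^{d/2}) - \im(\h^{d/2})^T \im(\h^{d/2})$ in the two-variable case, and their multivariable analogues.

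The main obstacle will be the inversion step, because the space of $h$-linear polynomials of degree $d/2$ is strictly larger than the image of the NC gradient on harmonic polynomials; a typical $Q_j$ is not a literal gradient of a harmonic, so the non-gradient part must be identified and projected away. The independence property of the harmonic basis is exactly what makes this projection canonical and forces the leftover obstruction to land in the harmonic polynomials of degree $d$, which are themselves within the span of $\{R^T S\}$ over harmonic $R,S$ of degree $d/2$.
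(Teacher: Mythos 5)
Your plan diverges fundamentally from the paper's argument and, more importantly, has an unresolved gap at exactly the step you yourself flag as the hardest. You propose to start from a noncommutative SOS representation of $\lap[p,h]$ and then to \emph{integrate} the witnesses $Q_j$ back up to harmonic polynomials $R_j$. But nothing guarantees that the $Q_j$'s produced by an abstract SOS theorem lie in, or can be canonically projected onto, the image of the NC-gradient map restricted to harmonics; as you concede, the space of $h$-linear degree-$d/2$ polynomials is strictly larger than that image. Declaring that the ``non-gradient part must be identified and projected away'' is not a construction --- it is a restatement of the problem. Likewise the final step, that the residue $p - \sum c_j R_j^T R_j$ is harmonic and lies in the span of $\h_i^T\h_j$, is asserted with a two-variable identity and a wave at ``multivariable analogues'' rather than proved. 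The independence hypothesis is supposed to rescue both steps, but you never actually use it to do anything: you never read off a coefficient, never build a specific $R_j$, and never verify the residue is harmonic.

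The paper avoids this entirely by never integrating at all. It decomposes $p$ \emph{directly} by grouping monomials according to their length-$d/2$ prefixes (the ``right neighbor'' decomposition $p = \sum_{|t|=d/2} x^t p_t(x)$), applies the Laplacian product rule to that decomposition, and uses a purely combinatorial degree-counting argument (how many $h$'s sit in the left half versus the right half of each monomial) to show that no cancellation can occur between the three Leibniz terms. The SOS form of $\lap[p,h]$ enters only to assert that every monomial of $\lap[p,h]$ carries exactly one $h$ in each half --- a structural fact, not a list of witnesses to be inverted. From this, $\lap(p_t)=0$ for all $t$; a symmetric application to a left-neighbor decomposition, using the independence hypothesis to isolate the $\h_j$-coefficient via the distinguishing monomial $w_j$, yields $p = \sum_{i,j}\phi_{ij}\h_i\h_j$ exactly. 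The remaining step is elementary linear algebra: symmetrize the coefficient matrix $\Phi$, diagonalize $\Phi = NJN^T$, and read off $p = \sum c_i R_i^T R_i$. You should replace the SOS-inversion plan with this neighbor decomposition, which is where the independence hypothesis actually earns its keep.
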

Because of this,
 knowing all homogeneous subharmonics will
likely occur once the harmonics are classified.

\proof \ The proof is found in \S \ref{sec:EvenSumHarms}.

\subsection{Comparison with Commutative Subharmonic Polynomials}

The study of harmonic and subharmonic polynomials in commuting
variables is classical.
Harmonic commuting polynomials are classified in any number
of variables and the have a close correspondence to
 spherical harmonics.
 A good reference on this is
 \cite{HoweTan} \S 2.4, pp. 110-113.

For two commuting  variables,
the homogeneous  harmonic polynomials are
 those of the form,
$$ Re (x_1 + I x_2)^n \ \ \  and  \ \ \  Im (x_1 + I x_2)^n,$$
so the commuting and noncommuting case are exactly parallel.

\subsection{Related Topics and Motivation}

\noindent
{\bf Non-Commutative Convexity} \
The non-commutative Hessian is defined as:
\begin{equation*}
\nchess[p(x_1,\ldots,x_g), \{ x_1,\eta_1\},\ldots, \{ x_g,\eta_g\}] :=
\frac{d^2}{dt^2}[p(x_1+t\eta_1,\ldots,x_g+t_g\eta_g)]_{|_{t=0}}. \\
\end{equation*}
Note that this is composed of several independent direction parameters,
$\eta_i$ and that if $p$ is a polynomial, then its Hessian is a polynomial
in $x$ and $\eta$ which is homogeneous of degree 2 in $\eta$.

A non-commutative polynomial is considered {\bf convex} wherever  its
Hessian is  matrix-positive.

A polynomial $p(x) = p(x_1,\ldots,x_d)$ is {\bf geometrically convex}
 if and only if, for every $X, Y \in \gtupn  $,
\begin{equation}
\frac{1}{2}\bigl(p(X) + p(Y)\bigr) - p\biggl(\frac{X + Y}{2}\biggr)
\eeq\end{equation}
is positive-semidefinite.
It is proved in \cite{geom}
that convexity is equivalent to geometric convexity.
A crucial fact regarding these polynomials
(see \cite{ConvPoly}) is that they are
all of degree two or less.
Some excellent papers on noncommutative convexity are
\cite{HanT06} \cite{Han97}.

The commutative analog of this ``directional''
Hessian is the quadratic function
\begin{equation}
H\bigl(p\bigr)
\begin{pmatrix} \eta_1 \\ \vdots \\ \eta_g
\end{pmatrix} \cdot
\begin{pmatrix} \eta_1 \\ \vdots \\ \eta_g
\end{pmatrix}
\end{equation}
where $H\bigl(p \bigr)$ is the Hessian matrix:
\begin{equation}
\begin{pmatrix}
\partial_{x_1x_1}p(x) &\cdots &\partial_{x_1x_g}p(x) \\
\vdots & \ddots & \vdots \\
\partial_{x_gx_1}p(x) &\cdots &\partial_{x_gx_g}p(x)
\end{pmatrix}.
\end{equation}
If this Hessian is positive
semidefinite  at all
 $(x_1,\ldots,x_g)$,
then $f$ is said to be convex.

\bigskip

\noindent
{\bf Non-Commutative Algebra in Engineering} \
Inequalities, involving polynomials in matrices and their
inverses, and associated optimization problems have become very
important in engineering.
When such polynomials are  matrix convex,
local minima are global. This is extremely important
in applications.
Also,
interior point numerical  methods apply well to these.
In the last few
years, the approaches that have been proposed in the field of
optimization and control theory based on linear matrix
inequalities and semidefinite programming have become very
important and promising, since the same framework can be used for
a large set of problems. Matrix inequalities provide a nice setup
for many engineering and related problems, and if they are convex
the optimization problem is well behaved and interior point
methods provide efficient algorithms which are effective on
moderate sized problems.
Unfortunately, the class of matrix convex
noncommutative polynomials  is very small;
as already mentioned
they are all of degree two or less \cite{ConvPoly}.

Our original interest in subharmonic polynomials
 was to analyze conditions similar to
convexity, though not as restrictive, in the hopes of finding much
broader classes which still had nice properties.
What we found (as reported here)
was that subharmonic polynomials are (in two) variables
a highly restricted class.

\bigskip

\noindent {\bf Noncommutative Analysis} This article would come
under the general heading of ``free analysis",
since the setting is a noncommutative algebra whose generators
are ``free" of relations. This is a
burdgeoning area, of which free probability is currently the
largest component. The interested reader is referred to the web
site \cite{AIMfree06}
of American Institute of Mathematics, in particular it gives the
findings of the AIM workshop in 2006 on free analysis.

A fairly expository article describing noncommutative convexity,
noncommutative semialgebraic geometry and relations to engineering is
\cite{HPprept}.

\section{Existence Proofs}
Now we set about to prove Theorem \ref{thm:main}. In this section, we show that the polynomials claimed to be harmonic and subharmonic are indeed. In section \ref{sec:unique}, we show that these are the only posibilities.
\subsection{Product Rules for Derivatives}
To begin with, we will build up facts about derivatives.
\subsubsection{Product Rule for First Derivatives}
\begin{lemma}
The product rule for the  directional derivative of
NC polynomials
is
\begin{equation}
\label{eq:DProd}
\dird[p_1\ncm {p_2},x_i,h]
= \dird[{p_1},x_i,h]\ncm {p_2}\, +\, {p_1}\ncm \dird[{p_2},x_i,h].
\eeq\end{equation}
\end{lemma}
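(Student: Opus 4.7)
The plan is to prove the product rule directly from the definition, mirroring the classical Leibniz argument but taking care to preserve order since the algebra is noncommutative.

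First I would observe that substitution $x_i \mapsto x_i + th$ is an algebra homomorphism from $\RRx$ to $\RRx[t]$ (polynomials over $\RRx$ in the auxiliary real parameter $t$). Consequently, for any two polynomials $p_1, p_2 \in \RRx$,
\begin{equation*}
(p_1 \ncm p_2)(x_1,\ldots,x_i+th,\ldots,x_g) = p_1(x_1,\ldots,x_i+th,\ldots,x_g) \cdot p_2(x_1,\ldots,x_i+th,\ldots,x_g).
\end{equation*}
Both sides are polynomials in $t$ with coefficients in $\RRx$. Next I would apply the ordinary product rule $\tfrac{d}{dt}[A(t)B(t)] = A'(t)B(t) + A(t)B'(t)$, which is valid in any associative algebra over $\RR$ because multiplication is $\RR$-bilinear; the crucial point is that we keep the factors in the same left-to-right order. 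Evaluating at $t=0$ and using the definition of $\dird[\cdot,x_i,h]$ then yields the claimed identity.

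As a sanity check (and an alternative route), I would note that by the linearity lemma already proved, it suffices to verify the product rule on monomials $m_1 = x^{w_1}$ and $m_2 = x^{w_2}$. From the example and remark following the definition, $\dird[m, x_i, h]$ is the sum over occurrences of $x_i$ in $m$ of the monomial obtained by replacing that single occurrence by $h$. In the concatenated monomial $m_1 \ncm m_2$, the occurrences of $x_i$ partition into those inside the $m_1$-block and those inside the $m_2$-block; replacing one inside $m_1$ produces a term of the form $(\text{term of }\dird[m_1,x_i,h]) \ncm m_2$, and replacing one inside $m_2$ produces a term of the form $m_1 \ncm (\text{term of }\dird[m_2,x_i,h])$. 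Summing gives exactly the right-hand side of \eqref{eq:DProd}.

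I do not expect any real obstacle here; the only subtlety is to be careful that the order of the factors is preserved throughout, since the algebra is noncommutative. Both approaches are routine, and either one suffices.
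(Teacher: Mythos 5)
Your proof is correct, and your first (primary) argument takes a genuinely different route from the paper. The paper proceeds combinatorially: it first establishes the rule for a product of two monomials $m_1 m_2$ by observing that $\dird[m_1 m_2, x_i, h]$ is the sum over all single replacements of $x_i$ by $h$, partitioned into replacements landing in the $m_1$-block versus the $m_2$-block, and then extends to general polynomials by bilinearity of multiplication and linearity of $\dird$. Your first approach is more structural: you observe that $x_i \mapsto x_i + th$ is an $\RR$-algebra homomorphism $\RRx \to \RRx[t]$, invoke the formal Leibniz rule $\tfrac{d}{dt}(AB) = A'B + AB'$ in the associative algebra $\RRx[t]$ (valid purely by $\RR$-bilinearity of multiplication, with no commutativity needed), and evaluate at $t=0$. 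This buys brevity and conceptual clarity, and it sidesteps the monomial case analysis entirely; what it costs is the need to justify that formal $t$-differentiation in $\RRx[t]$ satisfies Leibniz, which you correctly flag but do not spell out. The paper's approach is more elementary and self-contained, working directly with the ``replace one occurrence'' description that matches its running example, at the price of a slightly longer calculation. Your second (``sanity check'') argument is essentially identical to the paper's proof. Both routes are sound; the one point to make explicit if you keep the homomorphism route is the one-line verification that $\tfrac{d}{dt}$ on $\RRx[t]$ is a derivation over $\RRx$, since that is the entire content being transferred.
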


\proof \ \
The directional derivative $\dird[m,x_i,h]$
of a product $m=m_1m_2$ of non-commutative monomials
$m_1$ and $m_2$ is the sum of terms produced by
replacing one instance of $x_i$ in $m$ by $h$.
This sum can be divided into two parts:

$\mu_1$, the sum of terms whose $h$ lie in the first $|m_1|$ letters,
\ {i.e.} \ \ $ \dird[m_1,x_i,h]m_2$

$\mu_2$, the sum of terms whose $h$ lie in the last $|m_2 |$ letters,
\ {i.e.} \
$\ m_1 \dird[m_2,x_i,h].
$

\noindent
Therefore
\begin{equation}
\dird[m_1m_2,x_i,h] = \dird[m_1,x_i,h]m_2 + m_1\dird[m_2,x_i,h].
\eeq\end{equation}

We can extend this product rule to the product
of any two non-commutative polynomials $p_1$ and $p_2$ as follows.
\begin{align}
\dird&[p_1\ncm p_2,x_i,h] = \dird\Bigl[\Bigl(\underset{m_1\in
\WW_{p_1}}{\sum}A_{m_1}m_1\Bigr)\ncm\Bigl(\underset{m_2\in
\WW_{p_2}}{\sum}A_{m_2}m_2\Bigr),x_i,h\Bigr]  \notag\\
&= \underset{m_1\in \WW_{p_1}}{\sum}\;\underset{m_2\in \WW_{p_2}}{\sum}
A_{m_1}A_{m_2}\dird[m_1\ncm m_2,x_i,h] \notag\\
&= \underset{m_1\in \WW_{p_1}}{\sum}\;\underset{m_2\in
\WW_{p_2}}{\sum}A_{m_1}A_{m_2}\dird[m_1,x_i,h]\ncm m_2 +
A_{m_1}A_{m_2}\,m_1\ncm \dird[m_2x_i,h]\notag \\
&= \dird\Bigl[\!\!\underset{m_1\in
\WW_{p_1}}{\sum}\!\!A_{m_1}m_1,x_i,h\Bigr]\ncm \!\!\underset{m_2\in
\WW_{p_2}}{\sum}\!\!A_{m_2}m_2 + \!\!\underset{m_1\in
\WW_{p_1}}{\sum}\!\!A_{m_1}m_1\ncm \dird\Bigl[\!\!\underset{m_2\in
\WW_{p_2}}{\sum}\!\!A_{m_2}m_2,x_i,h\Bigr]\notag\\
&= \dird[p_1,x_i,h]\ncm p_2 + p_1\ncm \dird[p_2,x_i,h].
\end{align}\qed

\subsubsection{The Laplacian of a Product}
We now prove Theorem \ref{thm:main} part (2b).
\begin{lemma}
\label{lem:prodRulap}
The product rule for the Laplacian of NC polynomials is
\begin{equation}
\lap[p_1 \, p_2,h] = \lap[p_1,h]\, p_2 + p_1\ncm \lap[p_2,h]
     + 2\summ{i=1}{g}\bigl( \dird[p_1,x_i,h]\ncm\dird[p_2,x_i,h] \bigr).
\eeq\end{equation}
As a consequence if $p$ is harmonic, then
\begin{equation}
\label{eq:lapSq}
\lap[p^T \, p,h] =
      2\summ{i=1}{g}\bigl( \dird[p,x_i,h]^T \ncm\dird[p,x_i,h] \bigr).
\eeq\end{equation}
\end{lemma}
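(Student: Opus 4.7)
The plan is to reduce the Laplacian product rule to the first-order product rule \eqref{eq:DProd} that has already been established, by applying that rule twice, and then to derive \eqref{eq:lapSq} as a direct corollary by plugging $p_1 = p^T$ and $p_2 = p$.

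First, starting from the definition $\lap[p_1 p_2, h] = \sum_{i=1}^g \dird[\dird[p_1 p_2, x_i, h], x_i, h]$, I would apply \eqref{eq:DProd} to the inner derivative to obtain
\begin{equation*}
\dird[p_1 p_2, x_i, h] = \dird[p_1, x_i, h]\, p_2 + p_1\, \dird[p_2, x_i, h].
\end{equation*}
Then I would apply \eqref{eq:DProd} again to each of the two summands. By the linearity of $\dird[\cdot, x_i, h]$ in its first argument, the derivative distributes across the sum, and each summand yields two terms. After collecting, two of the resulting four terms are identical (the cross terms $\dird[p_1, x_i, h]\, \dird[p_2, x_i, h]$), giving the factor of $2$. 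Summing over $i$ recovers the claimed identity.

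For the consequence, if $p$ is harmonic, then $p^T$ is also harmonic: indeed, by the earlier lemma the directional derivative respects transposes, and so applied twice it shows $\lap[p^T, h] = \lap[p, h]^T = 0$. Setting $p_1 = p^T$ and $p_2 = p$ in the product rule then kills the first two terms on the right-hand side, leaving
\begin{equation*}
\lap[p^T p, h] = 2\sum_{i=1}^g \dird[p^T, x_i, h]\, \dird[p, x_i, h] = 2\sum_{i=1}^g \dird[p, x_i, h]^T\, \dird[p, x_i, h],
\end{equation*}
again using the transpose-compatibility of $\dird$.

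There is essentially no obstacle: the whole argument is bookkeeping on top of \eqref{eq:DProd} and the transpose-compatibility lemma. The only place to be a little careful is making sure that when the second application of \eqref{eq:DProd} is carried out, the two cross terms $\dird[p_1,x_i,h]\, \dird[p_2,x_i,h]$ really do coincide and get combined into the $2\sum_i$ expression, and that $h$ does not need any additional treatment (it is just an indeterminate on which everything is linear, so no product-rule issues arise involving $h$ itself).
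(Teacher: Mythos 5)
Your argument is correct and follows essentially the same route as the paper: apply the first-order product rule \eqref{eq:DProd} to the inner derivative, then again to the outer derivative, collect the two identical cross terms into the factor of $2$, and sum over $i$. The paper leaves the corollary \eqref{eq:lapSq} unproved; your explicit observation that harmonicity of $p$ implies harmonicity of $p^T$ (via transpose-compatibility of $\dird$) is a welcome completion of that step.
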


\proof
\begin{align*}
\label{eq:LapPros}
\lap[p_1 \, p_2,h]
& = \summ{i=1}{g} \dird[\dird[p_1\,p_2,x_i,h],x_i,h] \\
&= \summ{i=1}{g} \bigl( \dird[p_1\,\dird[p_2,x_i,h] +
      \dird[p_1,x_i,h]\,p_2, \ x_i,h]\bigr) \\
&= \summ{i=1}{g} \bigl(p_1\,\dird[\dird[p_2,x_i,h],x_i,h] +
     \dird[\dird[p_1,x_i,h],x_i,h]\,p_2 \\
& \qquad \qquad + 2\dird[p_1,x_i,h],\dird[p_2,x_i,h]\bigr)\\
&=\lap[p_1,h]\, p_2 + p_1\ncm \lap[p_2,h]
     + 2\summ{i=1}{g}\bigl( \dird[p_1,x_i,h]\ncm\dird[p_2,x_i,h] \bigr).
\end{align*}
\qed

\subsection{Formulas Involving  $\h^d$ and its Derivatives}

Recall from (\ref{eq:gammadef}) that $\h := x_1 + ix_2.$

 Note that $\h = \h^T$ and therefore that $\h^d = (\h^d)^T$.
So
\begin{equation}
\left(\re(\hd)\right)^T = \re((\h^d)^T) =
\re(\h^d)\text{ and }\left(\im(\hd)\right)^T=\im((\h^d)^T) = \im(\h^d).
\eeq\end{equation}
This proves the (last) assertion in Theorem \ref{thm:main}
that all but a few subharmonics on our list are symmetric.

\begin{lemma}
\label{lem:sym}
The derivatives of of $\h^d$
exhibit the following symmetries.
\begin{equation}
\dird[ \hdr,x_1,h] = \dird[ \hdi,x_2,h]
\eeq\end{equation}
and
\begin{equation}
\dird[ \hdr,x_2,h] = -\dird[\hdi,x_1,h].
\eeq\end{equation}
\end{lemma}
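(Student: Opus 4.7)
The plan is to compute the two-variable derivatives of $\h^d$ itself (treating the complex coefficients formally) and then split into real and imaginary parts.

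First I would observe that, by linearity of the directional derivative in the variable slot, $\ncder{\h}{x_1}{h}=h$ and $\ncder{\h}{x_2}{h}=ih$, so by the product rule (iterated across $\h^d=\h\cdot\h\cdots\h$),
\begin{equation*}
\ncder{\h^d}{x_1}{h}=\sum_{k=0}^{d-1}\h^{k}\,h\,\h^{d-1-k},
\qquad
\ncder{\h^d}{x_2}{h}=\sum_{k=0}^{d-1}\h^{k}(ih)\,\h^{d-1-k}=i\,\ncder{\h^d}{x_1}{h}.
\end{equation*}
So the single identity $\ncder{\h^d}{x_2}{h}=i\,\ncder{\h^d}{x_1}{h}$ packages everything.

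Next I would write $\h^d=\re(\h^d)+i\,\im(\h^d)$ and apply linearity of the derivative over $\RR$ (both $\re(\h^d)$ and $\im(\h^d)$ are NC polynomials with real coefficients in $x_1,x_2$, and $h$ is a real direction). Setting
\begin{equation*}
A:=\ncder{\re(\h^d)}{x_1}{h},\ B:=\ncder{\im(\h^d)}{x_1}{h},\ C:=\ncder{\re(\h^d)}{x_2}{h},\ E:=\ncder{\im(\h^d)}{x_2}{h},
\end{equation*}
gives $\ncder{\h^d}{x_1}{h}=A+iB$ and $\ncder{\h^d}{x_2}{h}=C+iE$. Plugging these into $\ncder{\h^d}{x_2}{h}=i\,\ncder{\h^d}{x_1}{h}=-B+iA$ and matching the real and imaginary parts yields $C=-B$ and $E=A$, which is exactly the pair of identities claimed in the lemma.

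No step is a real obstacle here: the only thing to be careful about is making sure the directional derivative is well-defined and linear on polynomials with complex coefficients, which follows immediately by extending the formal definition from $\RRx$ to $\CC\langle x\rangle$ coefficient-wise (equivalently, by applying the real-coefficient derivative separately to $\re(\h^d)$ and $\im(\h^d)$ and then reassembling). Once that is in place, the computation above is essentially one line.
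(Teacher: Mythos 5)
Your proof is correct, and it takes a genuinely different route from the paper. The paper proceeds by induction on $d$: it verifies the base case $d=1$ directly, and for the inductive step it expands $\re(\h^d)=x_1\re(\h^{d-1})-x_2\im(\h^{d-1})$ and $\im(\h^d)=x_1\im(\h^{d-1})+x_2\re(\h^{d-1})$, applies the first-order product rule, and invokes the inductive hypothesis to cancel terms. You instead work with $\h^d$ over $\CC$ directly: you iterate the product rule on the $d$-fold product to get explicit sums, observe that $\ncder{\h}{x_2}{h}=i\,\ncder{\h}{x_1}{h}$ propagates to $\ncder{\h^d}{x_2}{h}=i\,\ncder{\h^d}{x_1}{h}$, and then recover both real identities simultaneously by matching real and imaginary parts (using that $D$ applied to a real-coefficient NC polynomial stays real-coefficient). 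Your approach is shorter, non-inductive, and makes transparent that the two stated identities are just the real and imaginary parts of a single Cauchy--Riemann-type relation; the paper's induction is more elementary in that it never leaves $\RRx$, and it parallels the inductive structure used in the subsequent harmonicity proof, which is likely why the authors chose it.
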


\proof\ The proof proceeds by induction. To begin, it is easily seen that
\begin{align*}
\dird[\re(\h),x_1,h] &= \dird[\im(\h),x_2,h] = h \\
\dird[\im(\h),x_1,h] &= -\dird[\re(\h),x_2,h] = 0.
\end{align*}
Assume that
\begin{align*}
\dird[\hdrl,x_1,h] &= \dird[\hdil,x_2,h]\\
\dird[\hdil,x_1,1] &= -\dird[\hdrl,x_2,h].
\end{align*}
Then
\begin{align*}
\dird[&\hdr,x_1,h] = \dird[x_1\hdrl - x_2\hdil,x_1,h]\\
&= x_1\,\dird[\hdrl,x_1,h] + h\,\hdrl - x_2\,\dird[\hdil,x_1,h] \\ \\
\dird[&\hdi,x_2,h] = \dird[x_1\hdil + x_2\hdrl,x_2,h]\\
&= x_1\,\dird[\hdil,x_2,h] + x_2\,\dird[\hdrl,x_2,h] + h\,\hdrl,
\end{align*}
so
\begin{equation}
\dird[ \hdr,x_1,h] = \dird[ \hdi,x_2,h] \eeq\end{equation}
which satisfies the first half of our inductive hypothesis.
For the next half compute
\begin{align*}
\dird[&\hdr,x_2,h] = \dird[x_1\hdrl - x_2\hdil,x_2,h]\\
&= x_1\,\dird[\hdrl,x_2,h] - x_2\,\dird[\hdil,x_2,h] - h\,\hdil \\ \\
\dird[&\hdi,x_1,h] = \dird[x_1\hdil + x_2\hdrl,x_1,h]\\
&= x_1\,\dird[\hdil,x_1,h] + h\,\hdil + x_2\,\dird[\hdrl,x_2,h],
\end{align*}
so \begin{equation} \dird[ \hdr,x_2,h] = -\dird[\hdi,x_1,h]. \eeq\end{equation}
\qed

\subsection{Harmonics $degree \ >2$: Proof of Theorem
\ref{thm:main} \mbox{part (1)}}

Our proof will proceed by induction. Since the Laplacian of words of length 1 is zero,
\begin{equation}
\lap[\re(\h),h] = \lap[x_1,h] = 0 \quad \text{and} \quad  \lap[\im(\h),h] =
\lap[x_2,h] = 0 \label{eq:length1}.
\eeq\end{equation}
Now, assume that
\begin{equation}
\lap[\hdrl,h] = \lap[\hdil,h] = 0 \label{eq:indyp}.
\end{equation}
Pushing ahead,
\begin{align}
&\hdr = \re((x_1+ix_2)\,\hdl) = x_1\ncm\hdrl -
x_2\ncm\hdil\label{eq:first}\\
&\hdi = \im((x_1+ix_2)\,\hdl) = x_1\ncm\hdil + x_2\ncm\hdrl\label{eq:last}.
\end{align}
Applying our product rule to (\ref{eq:first}):
\begin{align*}
\lap[\hdr,h] &= \lap[x_1\ncm\hdrl,h] - \lap[x_2\ncm\hdil,h] \\
&= x_1\ncm\lap[\hdrl,h] + \lap[x_1,h]\ncm\hdrl \\
&\;\;\;\;\;\; + 2\dird[x_1,x_1,h]\ncm\dird[\hdrl,x_1,h]  \\
&\;\;\;\;\;\; + 2\dird[x_1,x_2,h]\ncm\dird[\hdrl,x_2,h]\\
&\;\;\;\; - x_2\ncm\lap[\hdil,h] - \lap[x_2,h]\ncm\hdil \\
&\;\;\;\;\;\; - 2\dird[x_2,x_1,h]\ncm\dird[\hdil,x_1,h]\\
&\;\;\;\;\;\; - 2\dird[x_2,x_2,h]\ncm\dird[\hdil,x_2,h].\\
\end{align*}
Use (\ref{eq:length1}) and (\ref{eq:indyp}) to obtain that the $\lap[]$
terms are 0, and that ``cross partials are 0" to get
$$
\lap[\hdr,h] = h\ncm\dird[\hdrl,x_1,h] - h\ncm\dird[\hdil,x_2,h].
$$
By symmetry Lemma \ref{lem:sym},
this means
\begin{equation}
\re(\lap[\hd,h]) = \lap[\hdr,h] = 0.
\eeq\end{equation}
By a similar argument, applying the product rule to  (\ref{eq:last}),
\begin{equation}
\im(\lap[\hd,h]) = \lap[\hdi,h] = 0.
\eeq\end{equation}
\qed

\subsection{Subharmonics $degree \ >4$:
Proof of Theorem \mbox{\ref{thm:main}  (2a.)}\label{sec:main:2a}}
\label{sec:main:3}

The product rule for the Laplacian of harmonics in
Lemma \ref{lem:prodRulap} says
$
\lap[\left(\hdr\right)^2,h]
$
is a sum of squares.
Thus we have shown $\left(\hdr\right)^2$ is subharmonic.

Now we prove
the formula (\ref{eq:lincomb})
relating subharmonics.
We use
\begin{align*}
\h^{2d} &= (\re(\h^d) + i\im(\h^d))^2\\
&= (\re(\h^d))^2 - (\im(\h^d))^2 + i(\re(\hd)\hdi + \hdi\hdr).
\end{align*}
Therefore
\begin{equation}
\re(\h^{2d}) = \left(\hdr\right)^2 - \left(\hdi\right)^2.
\eeq\end{equation}
So
\begin{align*}
c_0 \left[\re(\h^d)\right]^2\, +\,  &c_1\re(\h^{2d})\,+\, c_2\im(\h^{2d}) \\
&= c_0 \left[\im(\h^d)\right]^2\,+\,(c_0+c_1)\,(\re(\h^{2d})) +
c_2\im(\h^{2d}),\end{align*}
which is (\ref{eq:lincomb}).\qed

Up to this point we have handled subharmonics of even degree.
To see that there are no pure subharmonics of odd degree,
note that the Laplacian $L(x)$ of an odd degree
polynomial is itself an odd degree polynomial which is matrix-positive.
Consider $L(t x)$ as $t \in \RR$ approaches $\pm \infty$.
Since the highest order terms dominate,
the signs of these limits are opposite.
Thus the highest order terms are 0.

\section{Classification when Degree is Four or Less}
We handle now what appear to be special cases
which are exceptions to the general degree $>4$
theorem.

\subsection{The Matrix Representation}

Important in our proofs for polynomials of low degree
is a representation of polynomials\\$q(x_1, \cdots, x_g)[h]$
which are homogeneous of degree 2 in $h$.
Recall that often
$x$ stands for $ (x_1, \cdots, x_g)$ and $h$ is a single variable.
In our notation $q(x)[h]$ we use $[ \ ]$ to distinguish the
variable which is of degree 2.

Any NC symmetric polynomial $q$ in symmetric variables
 quadratic in $h$
can be  written
\begin{equation} q(x)[h]
= \summ{i = 1}{n}\summ{j=1}{n}
\,(h\,{m_i})^T\,{  Z_{ij}(x) }\,(h\,{m_j})
= \summ{i =
1}{n}\summ{j=1}{n}
\,({m_i}^T \, h) \,{Z_{ij}}\,(h\,{m_j})
\eeq\end{equation}
where $m_i$ are monomials in $x$
 and $Z_{ij}(x)$ are polynomials in $x$.

Define $Z(x)$ as the $N$-by-$N$ matrix of polynomials in $x$
whose $i,j^{\text{th}}$ element is
$Z_{ij}$, and define $V(x)[h]$ as
\begin{equation}
V(x)[h]^T = h(m_1,\,m_2,\,\ldots,\,m_N).
\eeq\end{equation}
We call $Z$ the {\bf middle matrix}
for $q$ and $V$ its {\bf border vector}.
In this notation our representation is
$$ q(x,h)= V(x)[h]^T Z(x) V(x)[h]$$
We can and typically do take $Z(x)$ to be symmetric.
If the monomials $m_i$ in $V(x)[h]$ do not repeat, then
$Z(x)$ is uniquely determined and is symmetric.

\begin{exa}
A ``middle matrix" representation $g=2$
\end{exa}
\begin{tabularx}{\linewidth}{X}
$3\,x_1h x_2^2hx_1 + hx_1x_2x_1h - h x_1 h x_2^2
-  x_2^2 h x_1 h
+
5\,x_1x_2hx_2hx_2x_1$ \\
\quad = $
\begin{pmatrix}
h \\ hx_1\\ hx_2x_1\\ hx_2^2\\
\end{pmatrix}^T
\begin{pmatrix}
x_1x_2x_1 & 0 & 0 & - x_1 \\
0 & 3\,x_2^2 & 0 & 0 \\
0 & 0 & 5\,x_2 & 0 \\
-x_1 & 0 & 0 & 0
\end{pmatrix}
\begin{pmatrix}
h \\ hx_1\\ hx_2x_1\\ hx_2^2\\
\end{pmatrix}.
$
\end{tabularx}

\subsubsection{Positivity of  $q$ vs. Positivity of its Middle Matrix}

A key fact is that positivity of $q$ is equivalent to positivity
of its middle matrix in the following sense.

\begin{lemma}
 \label{lem:datmostsublemma}
    Suppose $q(x)[h]$ a symmetric noncommutative polynomial in
    noncommuting variables pure quadratic in $h$ and $Z(x)$ is its
    middle matrix.  If $X \in \gtupn$ and $Z(X)\succeq 0$, then
    $q(X)[H] \succeq 0$ for all $H\in\gtupn$.

    Conversely, if $q(x)[h]$ is matrix positive; i.e., $q(X)[H]\succeq
    0,$ for every $n$ and $X,H\in\gtupn$ in the (non empty) positivity
    domain $\{X: \ f(X) \succ 0 \}$ of some polynomial $f$, then for
    each $n$ and $X \in \gtupn$, we have $Z(X)\succ 0$ on $\{X: \ f(X)
    \succeq 0 \}$.
\end{lemma}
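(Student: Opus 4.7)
The factorization $q(X)[H] = V(X)[H]^T Z(X) V(X)[H]$ provided by the middle-matrix representation is the only real tool. For the forward direction this is immediate: $V(X)[H]$ is an $Nn \times n$ block matrix whose $i$-th block is $H\,m_i(X)$, so $q(X)[H]$ is a congruence of $Z(X)$. Since $A^T Z(X) A \succeq 0$ whenever $Z(X) \succeq 0$, taking $A = V(X)[H]$ gives $q(X)[H] \succeq 0$ for every symmetric $H$, regardless of what $Z(X)$ does off the range of $V(X)[H]$.

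For the converse I would argue by contrapositive. Suppose some $X_0 \in \gtupn$ with $f(X_0) \succeq 0$ admits a block vector $v = (v_1,\ldots,v_N)^T \in \RR^{Nn}$ (blocks $v_i \in \RR^n$) with $v^T Z(X_0) v < 0$. Since the positivity domain $\{X : f(X) \succ 0\}$ is non-empty and open and the eigenvalues of $Z(X)$ vary continuously in $X$, a small perturbation puts $X_0$ inside the strict domain while preserving $v^T Z(X_0) v < 0$. The goal is then to construct a symmetric $H$ and a unit vector $u \in \RR^n$ with $V(X_0)[H]\,u = v$, because then $u^T q(X_0)[H] u = v^T Z(X_0) v < 0$, contradicting matrix-positivity of $q$ on the positivity domain.

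Producing such an $H$ is the principal obstacle. Componentwise one needs $H\,m_i(X_0)\,u = v_i$ for $i=1,\ldots,N$, which is an overdetermined linear system in the symmetric entries of $H$ when $n$ is fixed. I would remedy this by an \emph{ampliation}: replace $X_0$ with $\widetilde{X} := X_0 \otimes I_k$ for large $k$, lift $v$ to $\widetilde{v} := v \otimes e_1 \in \RR^{Nnk}$, and observe that $Z(\widetilde{X}) = Z(X_0) \otimes I_k$ inherits the negative direction, $\widetilde{v}^T Z(\widetilde{X}) \widetilde{v} = v^T Z(X_0) v < 0$. For $k$ sufficiently large and a generic $u \in \RR^{nk}$, the vectors $\{m_i(\widetilde{X})\,u\}_{i=1}^N$ can be arranged to be linearly independent --- exploiting that the distinct NC monomials $m_i$ act as linearly independent matrix-valued functions on sufficiently large symmetric tuples --- and then $H$ is solved from the resulting underdetermined linear system. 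Symmetry of $H$ can be enforced either by solving inside the symmetric subspace (dimensionally possible once $k$ is large) or, failing that, by one more $2\times 2$ dilation $\widetilde{H}\mapsto \bigl(\begin{smallmatrix} 0 & \widetilde{H} \\ \widetilde{H}^T & 0 \end{smallmatrix}\bigr)$ which renders any $\widetilde{H}$ symmetric at the cost of doubling the block size.

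The linear-independence step is the real technical hurdle; the rest is routine congruence and continuity. I also expect the $\succ$ versus $\succeq$ distinction in the stated conclusion to come from a density argument using that $\{f \succ 0\}$ is dense in $\{f \succeq 0\}$, which I regard as cosmetic rather than the heart of the proof.
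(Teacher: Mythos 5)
Your forward direction matches the paper's treatment (the paper calls it ``evident''), and the congruence argument is exactly right. For the converse, the paper does not actually supply a proof at all --- it simply cites \cite{CHSY03} (Lemma~9.5 and Theorem~10.10) and \cite{HMVjfa} (Proposition~6.1) --- so a self-contained argument would be genuinely useful. Your overall strategy (contrapositive, then construct $H,u$ with $V(X)[H]u=v$ so that $u^Tq(X)[H]u=v^TZ(X)v<0$) is indeed the strategy those references use. But the ampliation step as you have written it does not work.

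The problem is that tensoring with the identity creates no new linear independence. If $\widetilde{X}=X_0\otimes I_k$ then $m_i(\widetilde{X})=m_i(X_0)\otimes I_k$, so writing $u=\sum_\alpha u_\alpha\otimes e_\alpha$ gives $m_i(\widetilde{X})u=\sum_\alpha\bigl(m_i(X_0)u_\alpha\bigr)\otimes e_\alpha$, and a relation $\sum_i c_i\,m_i(\widetilde{X})u=0$ holds precisely when $\sum_i c_i\,m_i(X_0)$ annihilates every $u_\alpha$. Once the $u_\alpha$ span $\RR^n$ (achievable with $k\ge n$) this is equivalent to $\sum_i c_i\,m_i(X_0)=0$: linear independence of $\{m_i(\widetilde{X})u\}$ reduces exactly to linear independence of the fixed matrices $\{m_i(X_0)\}$, a property of $X_0$ alone which no amount of tensoring with $I_k$ can repair (it fails, e.g., whenever $X_0$ is a tuple of commuting matrices, or whenever $n$ is too small for the number of monomials). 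The cited sources instead enlarge by a \emph{direct sum} $X_0\oplus X_1\oplus\cdots$ with the extra blocks $X_j$ chosen inside the positivity domain so that the monomial evaluations become independent, while the bad vector $v\oplus 0\oplus\cdots$ still witnesses negativity of $Z$; arranging those blocks to exist inside $\{f\succ0\}$ is precisely the delicate point. Separately, your parenthetical that the $\succ$ versus $\succeq$ discrepancy is ``cosmetic'' is too quick: continuity from $\{f\succ0\}$ to its closure $\{f\succeq0\}$ can only deliver $Z(X)\succeq0$, not the strict $Z(X)\succ0$ asserted in the statement, so that half of the conclusion needs its own argument (or reflects an overstatement in the lemma as written).
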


\proof
  The first statement is evident. The converse is
  proved in   \cite{CHSY03}  in Lemma 9.5 and
Theorem 10.10 in  \cite{CHSY03}.
 for a cleaner proofs see \cite{HMVjfa} in
 particular Proposition 6.1. \qed

\subsection{The Zeroes Lemma}

The following is useful in our analysis of subharmonics.
\begin{lemma}
Let $S$
be any $N\times N$ symmetric matrix with entries in $\RRx$.
If there exists some diagonal entry $S_{ii} = 0$ and
corresponding off-diagonal entries \mbox{$S_{ij} = S^T_{ji} \neq 0$},
then  $S$ is not matrix-positive semidefinite.
\end{lemma}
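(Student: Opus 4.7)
The plan is to reduce positivity of $S$ to that of a carefully chosen $2\times 2$ principal submatrix, and then exploit the fact that a block PSD matrix whose $(1,1)$ block vanishes must have a zero $(1,2)$ off-diagonal block. Concretely, fix indices with $S_{ii}=0$ and $S_{ij}\neq 0$ in $\RRx$, and form
$$T(x) \;=\; \begin{pmatrix} 0 & S_{ij}(x) \\ S_{ji}(x) & S_{jj}(x) \end{pmatrix}.$$
Any principal submatrix of a PSD matrix is PSD, so to show $S$ fails to be matrix-positive it suffices to exhibit some $n$ and some $X\in\gtupn$ at which $T(X)$ fails to be PSD.

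I would argue by contradiction. Suppose $T(X)\succeq 0$ for every $n$ and every $X\in\gtupn$. Fix such $X$ and test the $2n\times 2n$ block matrix $T(X)$ against vectors of the form $\binom{tu}{w}$ with $u,w\in\RR^n$ and $t\in\RR$. Since the $(1,1)$ block is zero and $S_{ji}(X)=S_{ij}(X)^T$, the quadratic form collapses to
$$2t\,u^T S_{ij}(X)\,w \;+\; w^T S_{jj}(X)\,w.$$
Being an affine function of $t$, nonnegativity for all $t\in\RR$ forces $u^T S_{ij}(X)\,w = 0$ for every $u,w\in\RR^n$, i.e.\ $S_{ij}(X)=0$ as an $n\times n$ matrix.

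It remains to conclude that $S_{ij}(X)=0$ for every symmetric tuple $X$ of every size implies $S_{ij}$ is the zero polynomial in $\RRx$, contradicting the hypothesis $S_{ij}\neq 0$. This is the only nontrivial step; it is a standard fact about NC polynomial evaluation, verified by taking $n$ large enough and choosing generic $X_k\in\RR^{n\times n}_{sym}$ so that distinct monomials of bounded degree yield linearly independent matrices. The preceding linear-algebra reductions are routine, so this identity-testing step is where the real content lies.
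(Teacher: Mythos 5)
Your proof is correct, and it is genuinely sharper than the paper's own argument, although both share the same skeleton: pass to the $2\times2$ principal submatrix on indices $i,j$ and exploit the fact that the off-diagonal entry appears linearly in the quadratic form. The paper tests with scalar vectors $v=\beta_1 e_i+\beta_2 e_j\in\RR^N$, which (after evaluation at $X$) only sees the symmetrization $S_{ij}(X)+S_{ij}(X)^T$; a nonzero skew polynomial like $S_{ij}=x_1x_2-x_2x_1$ would then leave the paper's expression equal to $\beta_2^2 S_{jj}(X)$ for every $\beta_1$, and the ``choose $\beta_1$'' step produces no contradiction. Your block test vectors $\binom{tu}{w}$ with $u,w\in\RR^n$ are finer: linearity in $t$ yields $u^TS_{ij}(X)w=0$ for all $u,w$, hence $S_{ij}(X)=0$ outright, and you then need only the standard NC identity-testing fact (a nonzero element of $\RRx$ cannot vanish on all tuples of symmetric matrices of all sizes, seen by taking $n$ large and $X$ generic so that monomials of bounded degree are linearly independent). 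So you repair a latent gap in the paper's proof at the modest cost of invoking identity testing, which the paper avoids by silently restricting to the case where the symmetrized off-diagonal survives — a restriction that happens to be satisfied in the paper's application, where the relevant off-diagonal entries are scalar constants.
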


\proof \ \
Let $e_i$ and $e_j$ be standard basis vectors for $R^N$ \ (i.e.
$e_i^TA\,e_j = a_{ij}$)
\  and define\\  $v := \beta_1e_i+\beta_2 e_j$ where $\beta_1,\,
\beta_2  \in \RR$.
Then,
\begin{equation*}
\qquad v^TS(x)v = ((S_{ij}\,+\,S_{ji})\,\beta_1
+ S_{jj}\,\beta_2 )\,\beta_2  =
(2\,S_{ij}\,\beta_1 + S_{jj}\,\beta_2 )\,\beta_2
\end{equation*}
Given $\beta_2 >0$, we can choose $\beta_1$ such that
\begin{equation}
2\,\beta_1\,S_{ij}(X) \beta_2+ \beta_2S_{jj}(X) \,\beta_2
\eeq\end{equation}
is neither a positive nor negative matrix.\qed

This lemma is  useful when applied to our matrix representation of the
Laplacian of a symmetric NC polynomial .

\subsection{The Laplacian of a Degree 4 Polynomial}
\label{sec:main:2b}
We begin with a parameterization
the set of degree 4 homogeneous polynomials in symmetric free variables
\begin{align*}
p = A_1\,&x_1^4\ncm x_1^2 + A_2 (x_1^3\ncm x_2 + x_2\ncm x_1^3)
 +A_3\,(x_1^2\ncm x_2\ncm x_1 + x_1\ncm x_2\ncm x_1^2)
+A_4\,(x_1^2\ncm x_2^2
+ x_2^2\ncm x_1^2)\\
&+A_5\,(x_1\ncm x_2\ncm x_1\ncm x_2 + x_2\ncm x_1\ncm x_2\ncm x_1) +
A_6\,x_1\ncm x_2^2\ncm x_1
+A_7\,(x_1\ncm x_2^3 + x_2^3\ncm x_1) + A_8\,x_2\ncm x_1^2\ncm x_2 \\
&+A_9\,(x_2\ncm x_1\ncm x_2^2 + x_2^2\ncm x_1\ncm x_2) + A_{10}\,x_2^4.
\end{align*}
We calculate the Laplacian of $p$:
\begin{align*}
2\,A_1\,&(h^2\ncm x_1^2 + h\ncm x_1\ncm h\ncm x_1 + h\ncm x_1^2\ncm h
+ x_1\ncm h^2\ncm x_1 + x_1\ncm h\ncm x_1\ncm h + x_1^2\ncm h^2) \\
+\,2\,&A_2\,(h^2\ncm x_1\ncm x_2 + h\ncm x_1\ncm h\ncm x_2 + x_1\ncm h^2\ncm
x_2
+ x_2\ncm h^2\ncm x_1 + x_2\ncm h\ncm x_1\ncm h + x_2\ncm x_1\ncm h^2) \\
+\,2\,&A_3\,(h^2\ncm x_2\ncm x_1 + h\ncm x_1\ncm x_2\ncm h + h\ncm x_2\ncm
h\ncm x_1
+ h\ncm x_2\ncm x_1\ncm h + x_1\ncm h\ncm x_2\ncm h + x_1\ncm x_2\ncm h^2)
\\
+\,2\,&A_4\,(h^2\ncm x_1^2 + h^2\ncm x_2^2 + x_1^2\ncm h^2 + x_2^2\ncm h^2)
\\
+\,2\,&A_5\,(h\ncm x_1\ncm h\ncm x_1 + h\ncm x_2\ncm h\ncm x_2 + x_1\ncm
h\ncm x_1\ncm h + x_2\ncm h\ncm x_2\ncm h)
+\,2\,A_6\,(h\ncm x_2^2\ncm h + x_1\ncm h^2\ncm x_1) \\
+\,2\,&A_7\,(h^2\ncm x_2\ncm x_1 + h\ncm x_2\ncm h\ncm x_1 + x_1\ncm h^2\ncm
x_2
+ x_1\ncm h\ncm x_2\ncm h + x_1\ncm x_2\ncm h^2 + x_2\ncm h^2\ncm x_1)
+\,2\,A_8\,(h\ncm x_1^2\ncm h + x_2\ncm h^2\ncm x_2) \\
+\,2\,&A_9\,(h^2\ncm x_1\ncm x_2 + h\ncm x_1\ncm h\ncm x_2 + h\ncm x_1\ncm
x_2\ncm h
+ h\ncm x_2\ncm x_1\ncm h + x_2\ncm h\ncm x_1\ncm h + x_2\ncm x_1\ncm h^2)
\\
+\,2\,&A_{10}\,(h^2\ncm x_2^2 + h\ncm x_2\ncm h\ncm x_2 + h\ncm x_2^2\ncm h
+ x_2\ncm h^2\ncm x_2 + x_2\ncm h\ncm x_2\ncm h + x_2^2\ncm h^2).
\end{align*}

 The directional Laplacian is quadratic in $h$,
and so can be represented by border vector
\begin{equation}
V(x)[h]^T=
\begin{pmatrix}
h & \ x_1\,h & \ x_2\,h & \ x_1^2\,h &
\ x_1\,x_2\,h & \ x_2\,x_1\,h & \ x_2^2h
\end{pmatrix}^T
\eeq\end{equation}
and middle matrix $Z(x)$ which is
\begin{equation}
\begin{pmatrix}
     ^{(A_{1}\,+\,A_{8})\,x_1^2\,+\,(A_{6}\,+\,A_{10})\,x_2^2}
     _{+\,(A_{3}\,+\,A_{9})\,(x_1\ncm x_2\,+\,x_2\ncm x_1)\,} &
     \!^{(A_{1}\,+\,A_{5})\,x_1}_{\,+\,(A_{3}\,+\,A_{7})\,x_2}&
     \!^{(A_{2}\,+\,A_{9})\,x_1}_{\,+\,(A_{5}\,+\,A_{10})\,x_2}&
     \!\!^{A_{1}\!+\!A_{4}} &
     ^{A_{3}\!+\!A_{7}} &
     ^{A_{2}\!+\!A_{9}} &
     ^{\!A_{4}\!+\!A_{10}} \\ \\
     ^{(A_{1}\,+\,A_{5})\,x_1}_{\,+\,(A_{3}\,+\,A_{7})\,x_2}&
     ^{A_{1}\,+\,A_{6}} &
     ^{A_{2}\,+\,A_{7}} &0 &0 &0 &0\\ \\
     ^{(A_{2}\,+\,A_{9})\,x_1}_{\,+\,(A_{5}\,+\,A_{10})\,x_2}&
     ^{A_{2}\,+\,A_{7}} &
     ^{A_{8}\,+\,A_{10}} &0 &0 &0 &0\\ \\
     _{A_{1}\,+\,A_{4}} &0 &0 &{\bf 0} &0 &0 &0\\ \\
     _{A_{3}\,+\,A_{7}} &0 &0 &0 &{\bf 0} &0 &0\\ \\
     _{A_{2}\,+\,A_{9}} &0 &0 &0 &0 &{\bf 0} &0\\ \\
     _{A_{4}\,+\,A_{10}} &0 &0 &0 &0 &0 &{\bf 0}
\end{pmatrix}.
\eeq\end{equation}
Assume that $Z(X)$ is a positive semidefinite matrix for
$X\in(\mathbb{R}^{n\times n}_{sym})^g$.
By the Zeroes Lemma, the zeroes on the last four diagonals
force all entries
in the last four rows or columns to be zero, that is:
\begin{equation}
A_4 = -A_1,\quad A_{10} = A_1,\quad A_9 = -A_2,\quad A_7 = -A_3.
\end{equation}

Applying these conditions to the matrix above,
 and ignoring the rows and columns which are zero, we have:
\begin{equation*}
\begin{pmatrix}
            _{+\,(A_2\, -\, A_3)(x_2\ncm\, x_1\,-\,x_1\ncm\, x_2)}^{(A_1\, +\,
A_8)\, x_1^2\,+\,(A_1\, +\, A_6)\, x_2^2} &
            ^{(A_1 + A_5) x_1} & ^{(A_1 + A_5) x_2} \\
            _{(A_1 + A_5) x_1} & _{A_1 + A_6} & _{A_2 - A_3} \\
            _{(A_1 + A_5) x_2} & _{A_2 - A_3} & _{A_1 + A_8}
\end{pmatrix}.
\end{equation*}
This matrix can be simplified by substitution
of reoccurring pairs by single
letters:
\begin{gather*}
G = A_1 + A_6,\quad H = A_1 + A_8\quad J = A_2 - A_3,\quad K = A_1 + A_5.
\end{gather*}
to obtain
\begin{equation*}
\begin{pmatrix}
H x_1^2 - J (x_1\ncm x_2 + x_2\ncm x_1) + G x_2^2 & K x_1 & K x_2 \\
K x_1 & G & J\\
K x_2 & J & H
\end{pmatrix}.
\end{equation*}
We now  find its noncommutative $LDL^T$ (Cholesky) decomposition to
have $D$ term equal to
\begin{equation*}
\begin{pmatrix}
G& 0& 0 \\
0& H-\frac{J^2}{G}& 0 \\
0& 0& H x_1^2 + J (x_1\ncm x_2 + x_2\ncm x_1) +  G x_2^2- \frac{K^2
x_1^2}{G} - \frac{(\frac{J K x_1}{G} + K x_2)\ncm (\frac{J K x_1}{G} + K
x_2)}{H - \frac{J^2}{G}}
\end{pmatrix}.
\end{equation*}
A reference is \cite{CHSY03}  which describes the NCAlgebra
command, NCLDUDecomposition,  we used to do this.

We see there are three inequalities,
which must be satisfied for $Z(X)$ to be positive semidefinite.
\begin{equation*}
G > 0,\quad  H-\frac{J^2}{G} > 0,
\end{equation*}
$$
\quad H X_1^2 + J (X_1\ncm X_2 + X_2\ncm X_1) +  G X_2^2 - \frac{K^2
X_1^2}{G} - \frac{(\frac{J K X_1}{G} + K X_2)\ncm (\frac{J K X_1}{G} + K
X_2)}{H - \frac{J^2}{G}} > 0.
$$
The last condition is purely quadratic in $X_1$ and $X_2$,
and therefore has a middle matrix representation which we
compute to be
\begin{equation*}
\begin{pmatrix}
G-\frac{H^2K^2}{(G-\frac{H^2}{J})J^2} - \frac{K^2}{J} &
H+\frac{HK^2}{(G-\frac{H^2}{J})J} \\
H+\frac{HK^2}{(G-\frac{H^2}{J})J} &
J-\frac{K^2}{G-\frac{H^2}{J}}
\end{pmatrix}.
\end{equation*}

Again
we perform the $LDL^T$ decomposition:
\begin{equation*}
\begin{pmatrix}
G - \frac{K^2}{H - \frac{J^2}{JG}}& 0 \\
0& H - \frac{J^2K^2}{(H-\frac{J^2}{G})G^2} - \frac{K^2}{G} -
\frac{\bigl(J-\frac{J K^2}{(H-\frac{J^2}{G})G}\bigr)^2}{G - \frac{K^2}{H -
\frac{J^2}{G}}}
\end{pmatrix}.
\end{equation*}
Although the inequality
\begin{equation}
H - \frac{J^2K^2}{(H-\frac{J^2}{G})G^2} - \frac{K^2}{G} -
\frac{\bigl(J-\frac{J K^2}{(H-\frac{J^2}{G})G}\bigr)^2}{G - \frac{K^2}{H -
\frac{J^2}{G}}} > 0
\label{eq:comp}
\end{equation}
is quite complicated, we can simplify it some by multiplying it by
expressions which are known to be positive, such as:
\begin{equation*}
G,\quad H - \frac{J^2}{G},\quad \text{and}\quad G - \frac{K^2}{H -
\frac{J^2}{G}}
\end{equation*}
which we encountered earlier. This gives a polynomial inequality equivalent
to (\ref{eq:comp}), which, after some simplification, gives us:
\begin{equation*}
(G H - J^2 - K^2)^2 > 0.
\end{equation*}
Which, considering only the case of all real coefficients, is rather
vacuous, informing us only that $GH - J^2 - K^2 \neq 0$.

Bringing all our inequalities together
(simplifying each as we did above),
we obtain
\begin{equation*}
(I)\; G > 0,\qquad  (II)\; G H > J^2,\qquad (III)\; G H > J^2 + K^2,\qquad
(IV)\; GH \neq H^2 + K^2.
\end{equation*}
Notice  (III) implies (II) and (IV),
 thus reducing to $(I)$ and $(II)$.
 Therefore, we conclude that the set
of polynomials making the Laplacian
matrix ``positive''  is exactly those of the form:
\begin{eqnarray}
f&=&A_1(x_1^4-x_1^2\ncm x_2^2-x_2^2\ncm x_1^2+x_2^4)
+A_2(x_1^3\ncm x_2+x_2\ncm x_1^3-x_2\ncm x_1\ncm x_2^2-x_2^2
\ncm x_1\ncm x_2)\\
&&+A_3(x_1^2\ncm x_2\ncm x_1 + x_1\ncm x_2\ncm x_1^2 - x_1
\ncm x_2^3-x_2^3\ncm x_1 )
+A_5(x_1\ncm x_2\ncm x_1\ncm x_2+x_2\ncm x_1\ncm x_2\ncm x_1)\notag\\
&&+A_6\,x_1\ncm x_2^2\ncm x_1 + A_8 x_2\ncm x_1^2\ncm x_2 \notag
\end{eqnarray}
with coefficients satisfying the inequalities:
\begin{align}
&(III)\implies(A_1 + A_8)(A_1 + A_6) >
(A_3 - A_2)^2 + (A_1 + A_5)^2 \text{ and }\\
&(I)\implies\quad A_1 + A_8> 0\quad (\text{or, equivalently}
\quad A_1 + A_6 >
0).\end{align}
For neatness, and to more clearly see the dimension of the
space of subharmonics,we set
$B_1=A_1, B_2=A_2, B_3=A_3, B_4=A_5, B_5=A_6, B_6=A_8$.
\qed

\section{Uniqueness Proofs}
\label{sec:unique}
Now we set about to prove that
the list of subharmonic and harmonic
polynomials in Theorem \ref{thm:main}
is complete. We do  this, as is required, only
for two variables but in the course of
our proof
we discover some promising recursions valid
in any number of variables.

\subsection{Even Degree Homogeneous $p$}
Given $ 0<m<d$ a noncommutative polynomial $p$  of degree $d$
decompose it as
\begin{equation}
\label{eq:defneighbor}
p = \sum_{|t|= m} x^{t} p_{t}(x)
+ \Lambda
\end{equation}
where $deg \ \Lambda  <m$.
Call the polynomial $p_{t}(x)$
the {\bf right neighbor of $x^{t}$}

\begin{lemma}
\label{lem:subToharm}
If $p$ is harmonic in any number of variables
consider the right neighbor representation
of $p$ for any $m$;
the right neighbor $p_{t}$ of
 each monomial $x^{t}$ of degree $m$ is harmonic,
 that is, Lap($p_{t})=0$.

If $p$ is subharmonic in any number of variables,
if $p$ is homogeneous of degree $d$
then the right neighbor $p_{t}$ of
 each monomial $x^{t}$ of degree $\dd2$ is harmonic,
 that is, Lap($p_{t})=0$.
\end{lemma}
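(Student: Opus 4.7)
My plan is to isolate, inside $\lap[p,h]$, the ``deeply $x$-prefixed'' part. Define the \emph{leading $x$-run} $\ell(W)$ of a word $W$ in the letters $x_1,\ldots,x_g,h$ as the length of its maximal initial segment containing no $h$. I will show that the part of $\lap[p,h]$ with $\ell\ge m$ is exactly $\sum_{|t|=m} x^{t}\,\lap[p_{t},h]$, and then argue that this part must vanish in both cases of the lemma; since distinct $x^{t}$ of length $m$ give distinct initial segments, no cross-cancellation among the summands is possible, and each $\lap[p_{t},h]$ must vanish separately.

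To establish the decomposition, I expand $\lap[x^{t} p_{t},h]$ via the product rule (Lemma \ref{lem:prodRulap}) and track leading $x$-runs of the three kinds of summands that appear. The summand $\lap[x^{t},h]\,p_{t}$ has $\ell\le m-2$, because the length-$m$ word $\lap[x^{t},h]$ contains two $h$'s; the ``cross'' summand $\dird[x^{t},x_i,h]\,\dird[p_{t},x_i,h]$ has $\ell\le m-1$, because $\dird[x^{t},x_i,h]$ has length $m$ with one $h$; and the summand $x^{t}\,\lap[p_{t},h]$ has $\ell\ge m$, because $x^{t}$ is a pure-$x$ word of length $m$. In the non-homogeneous setting of the first claim, $\lap[\Lambda,h]$ has every term of length less than $m$ and so cannot contribute to the $\ell\ge m$ part either.

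For the harmonic claim the proof now closes: $\lap[p,h]=0$ forces the $\ell\ge m$ part to vanish, so $\lap[p_{t},h]=0$ for every $t$. For the subharmonic claim I use the middle-matrix representation $\lap[p,h]=V^T Z(x) V$ of Lemma \ref{lem:datmostsublemma}, taking $V$ to consist of all pure-$x$ monomials of lengths $0,1,\ldots,2m-2$. In this basis each term $c\,W_1 h W_2 h W_3$ of $\lap[p,h]$ is uniquely encoded in the $(i,j)$-entry with $m_i^T=W_1$ and $m_j=W_3$, where $Z_{ij}$ must have degree $2m-2-|m_i|-|m_j|$. For $|m_i|\ge m$ this forced degree is at most $-2$, so $Z_{ii}\equiv 0$ identically. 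Matrix positivity of $\lap[p,h]$ gives $Z(X)\succeq 0$ for every $X$ via Lemma \ref{lem:datmostsublemma}, and the Zeroes Lemma then forces the entire $i$-th row of $Z$ to vanish whenever $|m_i|\ge m$. These are precisely the entries producing the $\ell\ge m$ part of $\lap[p,h]$, so that part is zero, and the argument of the harmonic case finishes the proof.

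The main obstacle is the leading-$x$-run bookkeeping: one must check carefully that among the product-rule summands only $x^{t}\,\lap[p_{t},h]$ can produce a pure-$x$ prefix of length at least $m$. Once that is in place, the harmonic case is immediate and the subharmonic case reduces to a clean application of the Zeroes Lemma, together with the fact that a polynomial forced to live in a negative-degree space must vanish identically.
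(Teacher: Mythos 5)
Your proof is correct. For the first (harmonic) claim you take essentially the paper's route: both proofs apply the Laplacian product rule to the right-neighbor decomposition, obtain the three pieces plus $\lap[\Lambda,h]$, and observe that $\sum_{|t|=m} x^{t}\,\lap[p_{t},h]$ is the unique piece in which both $h$'s lie beyond position $m$ --- you phrase this as ``leading $x$-run at least $m$'', the paper says ``both $h$'s in the $m$-back'' --- so it cannot cancel against the other pieces and must vanish on its own, whence each $\lap[p_t,h]=0$. For the second (subharmonic) claim your route is genuinely different. The paper argues that matrix positivity makes $\lap[p,h]$ a sum of squares $\sum_j L_j^T L_j$ with each $L_j$ homogeneous of degree $\dd2$ and linear in $h$, so every surviving monomial has one $h$ in each half of length $\dd2$, contradicting the form of the leading piece. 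You instead take the full middle-matrix representation $\lap[p,h]=V^T Z V$ with border monomials of all lengths $0,\dots,d-2$, observe that the diagonal entries $Z_{ii}$ with $|m_i|\ge\dd2$ have forced negative degree and so vanish, pass from matrix positivity of $\lap[p,h]$ to $Z(X)\succeq 0$ via Lemma \ref{lem:datmostsublemma}, and use the Zeroes Lemma to annihilate those entire rows, which are exactly the entries encoding monomials of leading $x$-run $\geq\dd2$. Both routes work; yours is arguably more self-contained, since it reuses the middle-matrix and Zeroes-Lemma machinery already developed in the paper for the degree-four classification rather than invoking the sum-of-squares representation of matrix-positive polynomials, which the paper uses but does not reprove. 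One point worth making explicit when you write this up: you are applying Lemma \ref{lem:datmostsublemma} to a deliberately non-minimal border vector (some of whose monomials you are about to show do not occur at all), and what you need from it is the non-strict conclusion $Z(X)\succeq 0$, which is the intended content of that lemma even though its stated converse is phrased with a strict inequality.
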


\proof\
Apply the Laplacian to
the right neighbor decomposition (\ref{eq:defneighbor}) of
$p$ and get from
the product rule for the Laplacian (Lemma \ref{lem:prodRulap}):
\vspace{1ex}

$\ncl{p}{h}=$
\begin{equation}
\label{eq:Lapp1}
 \sum_{|t|= m} x^{t}  \ncl{p_{t}(x)}{h}
\end{equation}
\begin{equation}
\label{eq:Lapp2}
+  \sum_{|t|= m}  \ncl{x^{t}}{h} p_{t}(x)
\end{equation}
\begin{equation}
\label{eq:Lapp3}
+ 2\sum_{|t|= m} \dird(x^{t})[h] \dird(p_{t})[h]
\end{equation}
$$ + \ncl{\Lambda}{h}. $$

Suppose $\ncl{p}{h}=0$.
We shall now show that polynomial
(\ref{eq:Lapp1}) is 0, (\ref{eq:Lapp2}) is 0, and
(\ref{eq:Lapp3}) is 0.
All  terms of the
polynomials (\ref{eq:Lapp1}), (\ref{eq:Lapp2}), and
(\ref{eq:Lapp3})  have  degree at least $m$,
while $deg \ \Lambda <m$.
Since the Laplacian of a polynomial respects  degree,
we have
$\lap(\Lambda)=0$.
Next factor a given degree $\geq m$
monomial $r$ into its {\it $m-$front and back:}
namely, $r=r_f r_b$ where $r_f$ has degree $m$.
Consider the polynomial (\ref{eq:Lapp1}):
the $m-$back of each monomial in it contains
two $h$'s.
Likewise the $m-$back of each monomial in
(\ref{eq:Lapp2}) and (\ref{eq:Lapp3})
contains no $h$'s and one $h$ respectively.
Thus polynomials (\ref{eq:Lapp1}), (\ref{eq:Lapp2}), and
(\ref{eq:Lapp3}) contain no monomials  which
 cancel and since their sum is zero
they must be zero.
From (\ref{eq:Lapp1}) is 0 we immediately get
$Lap(p_t)=0$.
This proves  the first part of the lemma.

Now to the subharmonic part.
That $\ncl{p}{h}$ is matrix positive
implies that it is a sum of squares:
\begin{equation}
\label{eq:LapSoS}
\lap(p) = \sum_j L_j^T(x)[h] L_j(x)[h]
\end{equation}
First observe that each $L_j$ is linear in $h$.
This is true since the highest degree in $h$ monomial
$\lambda(x)[h]$
of  $ L_j(x)[h]$ contributes a $\lambda(x)[h]^T\lambda(x)[h]$
to  $L_j^T(x)[h] L_j(x)[h]$ monomial
which holds because its coefficient is positive and can not be cancelled out;
likewise $\lambda(x)[h]^T\lambda(x)[h]$ appears in $\lap(p)$.
Thus the monomial $\lambda(x)[h]$ has degree one in $h$.

Because of equation (\ref{eq:LapSoS})
we can refer to each term of $\lap(p)$
as having a first half and second
half; each half has degree $\dd2$.
Also every term
of $\lap(p) $ has an $h$ in its first half
 and also in its second half.
 However, if $m= \dd2$
all terms in (\ref{eq:Lapp1}) have two $h$'s in their
second half and none in their first half.
This contradicts the previous sentence;
thus equation (\ref{eq:Lapp1}) is 0.
Since we have factored out $x^t$ in the representation
(\ref{eq:Lapp1}), their coefficients $Lap(p_t)$ are 0
for all $|t|=\dd2$.
\qed

\subsubsection{Homogeneous Subharmonics
are Sums of Products of Harmonics}\label{sec:EvenSumHarms}

In this section we prove under weak
hypotheses that homogeneous subharmonics
are sums of products of harmonics.
A subharmonic polynomial of odd degree is harmonic,
so is the product of itself and 1.
Thus we restrict to even degree and prove the following.

\begin{prop}
\label{prop:EvenSubharmSumProdHarm}
Assume the harmonic polynomials
homogeneous of degree $\dd2$ are the span of $\h_1, \cdots, \h_k$.
Assume there is a monomial $w_j$ in $\h_j$ which does not occur
in the other $\h_1,  \cdots, \h_k$.
\\
If $p$ is subharmonic homogeneous of even degree $d$, then
it has the form
\begin{equation}\label{eq:repp}
p = \sum_{i,j=1}^k \phi_{ij} \h_i \h_j
\end{equation}
where each $\phi_{ij}$ is a real number.
Note further that for symmetric $p$
we may take $\phi_{ij}=\phi_{ji}$.
Let  $ \cS$ denote the span  of these
 symmetric subharmonics.

This implies that  $\cS$
is a space of at most dimension $\frac{k(k+1)}{2}$.
For example, in two variables there are two  independent
homogeneous harmonic polynomials of degree other than 2, so
 $dim \ \cS$ is at most 3 for all $d \neq 4$.
For $d=4$ we have $dim \ \cS \leq 6$.
\end{prop}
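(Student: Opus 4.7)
The plan is to apply Lemma \ref{lem:subToharm} at $m = \dd2$ to rewrite $p$ in the form $p = \sum_j Q_j \h_j$ with each $Q_j$ homogeneous of degree $\dd2$, and then to prove that each $Q_j$ must itself be harmonic. Once that is done, expanding $Q_j$ in the basis $\h_1, \ldots, \h_k$ yields the representation (\ref{eq:repp}).

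First, because $p$ is homogeneous of degree $d$, taking $m = \dd2$ in the right-neighbor decomposition (\ref{eq:defneighbor}) forces $\Lambda = 0$, and Lemma \ref{lem:subToharm} tells us each right neighbor $p_t$ is a harmonic polynomial of degree $\dd2$. Writing $p_t = \sum_i c_{ti} \h_i$ and regrouping by $\h_j$ gives $p = \sum_j Q_j \h_j$ where $Q_j := \sum_{|t| = \dd2} c_{tj} x^t$. Applying the product rule for the Laplacian (Lemma \ref{lem:prodRulap}) together with $\lap[\h_j, h] = 0$ then yields
\begin{equation*}
\lap[p, h] \;=\; \sum_j \lap[Q_j, h] \, \h_j \;+\; 2 \sum_{j, k} \ncder{Q_j}{x_k}{h} \, \ncder{\h_j}{x_k}{h}.
\end{equation*}

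The next step is to show the first sum above vanishes. Subharmonicity of $p$ gives a sum-of-squares expression $\lap[p, h] = \sum_\ell L_\ell^T L_\ell$ in which each $L_\ell$ is linear in $h$ and of total degree $\dd2$; as in the proof of Lemma \ref{lem:subToharm}, every monomial of $L_\ell^T L_\ell$ then carries exactly one $h$ among the first $\dd2$ letters and one $h$ among the last $\dd2$ letters. By contrast, monomials in $\lap[Q_j, h] \, \h_j$ carry both $h$'s among the first $\dd2$ letters (since $\h_j$ contains no $h$), while monomials in $\ncder{Q_j}{x_k}{h} \, \ncder{\h_j}{x_k}{h}$ carry one $h$ in each half. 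These two families of monomials are therefore disjoint, and since their total must lie entirely in the one-$h$-per-half stratum, the two-$h$'s-in-front stratum must cancel by itself: $\sum_j \lap[Q_j, h] \, \h_j = 0$. The unique-monomial hypothesis now kicks in. For any length-$\dd2$ monomial $\mu$ with two $h$'s, the coefficient of $\mu \cdot w_j$ in $\sum_i \lap[Q_i, h] \, \h_i$ can only arise by splitting the product as $\lap[Q_i, h]$ contributing $\mu$ and $\h_i$ contributing $w_j$; since $w_j$ does not appear in any $\h_i$ with $i \ne j$, this coefficient reduces to $(\lap[Q_j, h])_\mu \cdot (\h_j)_{w_j}$, forcing $(\lap[Q_j, h])_\mu = 0$. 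Hence $\lap[Q_j, h] = 0$, so $Q_j$ is harmonic of degree $\dd2$; expanding $Q_j = \sum_i \phi_{ij} \h_i$ produces $p = \sum_{i,j} \phi_{ij} \h_i \h_j$. For symmetric $p$, averaging this representation with the one obtained from $p^T$ (using that the span of the $\h$'s is closed under transpose, since the Laplacian respects transposes) lets us arrange $\phi_{ij} = \phi_{ji}$, and the dimension bound $\frac{k(k+1)}{2}$ is immediate.

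The main obstacle is the positional bookkeeping for the two $h$'s: one must pin down precisely that the SoS structure of any subharmonic Laplacian fixes each $h$ to a specific half of the monomial, and that $\lap[Q_j, h] \, \h_j$ occupies a stratum disjoint from the derivative-cross-term part of the product-rule expansion. Once that disjointness is in hand, the independence property of the distinguished monomials $w_j$ cleanly isolates each $\lap[Q_j, h]$, and the remainder of the argument is routine.
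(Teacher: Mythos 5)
Your proof is correct and follows essentially the same route as the paper. Both arguments begin identically: apply the right‐neighbor decomposition with $m=\dd2$, invoke Lemma \ref{lem:subToharm} to get each $p_t$ harmonic, expand in the basis and regroup to obtain $p=\sum_j Q_j\h_j$. For the second half, the paper passes to the \emph{left}‐neighbor decomposition of $p$ and cites (implicitly, "by symmetry") a left‐handed version of Lemma \ref{lem:subToharm}, using the independence hypothesis to identify the left neighbor of $w_j$ with $Q_j$. You instead apply the Laplacian product rule directly to $p=\sum_j Q_j\h_j$ and rerun the $h$‐position stratification argument inline: the term $\sum_j\lap[Q_j,h]\,\h_j$ has both $h$'s in the front half, disjoint from the one‐$h$‐per‐half stratum that the SoS form of $\lap[p,h]$ occupies, so it must vanish, and then the coefficient of $\mu\cdot w_j$ isolates $\lap[Q_j,h]$ via the unique‐monomial hypothesis. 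This is a slightly more explicit, self‐contained rendering of exactly the step the paper delegates to the left‐handed lemma; the mathematical content and the role of each hypothesis are the same.
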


\proof \ \
Assume $p$ is subharmonic homogeneous of degree $d$.
Write down its right neighbor representation
with $m=\dd2$ and
use Lemma \ref{lem:subToharm} to get $Lap( p_{t})=0$ for $|t|= \dd2$.
Thus
$$p_{t}= \sum_j^k \mu_j(t) \h_j$$
for some numbers $\mu_j(t)$.
Plug this into the decomposition
\begin{equation}
p = \sum_{|t|= \dd2} x^{t} p_{t}(x)
 \ =\ \sum_{ |t|= \dd2} \sum_j \mu_j(t) x^{t} \h_j
\end{equation}
to get
$$
p =\sum_j \left(\sum_{|t|= \dd2}\mu_j(t)x^{t}\right)\h_j
=\sum_j^k p^j(x) \h_j.
$$
Now make a left neighbor decomposition of $p$
which by the definition of the monomial $w_1$ has the form
$$ p^1(x) w_1 +  G$$
where all terms of $G$ are  without $w_1$ on the right.
The left neighbor version of
Lemma \ref{lem:subToharm} implies $ Lap(p^1(x) )=0$.
Likewise each $p^j(x)$ is harmonic of degree $\dd2$.
This proves representation (\ref{eq:repp}) for $p$.
\qed

Next we prove   our representation of subharmonics stated
in the introduction as Theorem \ref{thm:SubsToSquaresOfHarms}.

\noindent
{\bf Proof  of  Theorem \ref{thm:SubsToSquaresOfHarms}} \\
Now suppose $p$ is symmetric.
 Proposition \ref{prop:EvenSubharmSumProdHarm}  says we can
 represent $p$ as in equation (\ref{eq:repp}).
 Note if $u$ is harmonic then $u^T$
is harmonic and relabel and possibly expand (by taking transposes)
the set $\gamma_1, \dots, \gamma_k$ as
$$s_1, \dots, s_\alpha, u_1, \dots, u_\beta, u_1^T, \dots, u_\beta^T$$
where the $s_i$ are symmetric polynomials.
Set $\Psi:= \{ \tilde \phi_{ij} \}_{i,j=1}^{\alpha+2\beta}$
where  $\tilde \phi_{ij}= \phi_{ij}$
for   $i,j$  corresponding to an original $\gamma_\ell$
and 0 otherwise.
Now let
$s=
\begin{pmatrix}
s_1\\ \vdots \\ s_\alpha
\end{pmatrix}$,
$u=\begin{pmatrix}
u_1\\ \vdots \\ u_\beta
\end{pmatrix}$
and
$v=
\begin{pmatrix}
u_1^T\\ \vdots \\ u_\beta^T
\end{pmatrix}
$.
Then
$p=\begin{pmatrix}
s \\ u \\ v
\end{pmatrix}^T
\Psi
\begin{pmatrix}
s \\ u \\ v
\end{pmatrix}$
and
$$
p = \dfrac{p+p^T}{2} =
\begin{pmatrix}
s \\ u \\ v
\end{pmatrix}^T
\Phi
\begin{pmatrix}
s \\ u \\ v
\end{pmatrix}
\qquad
$$
 where $\Phi
= \dfrac{\Psi + \Psi^T}{2}$,
a symmetric matrix as required.

Decompose the symmetric matrix $\Phi$ as $ \Phi= N J N^T$ where $J$
is a diagonal  matrix with $\pm 1$ or 0 on the diagonal
 and $N$ has real
numbers as entries.
Now, let us put $R=N^T\begin{pmatrix}
s \\ u \\ v
\end{pmatrix}$. Then
$$
p=
\begin{pmatrix}s \\ u \\ v\end{pmatrix}^T \Phi \begin{pmatrix}s \\ u \\ v\end{pmatrix}
=
\begin{pmatrix}
s \\ u \\ v
\end{pmatrix}^T N J N^T\begin{pmatrix}
s \\ u \\ v
\end{pmatrix}
= R^T J R
= \sum_i c_i R_i^T R_i.
$$
where $c_j$  is $\pm 1$ or 0.
The $s_i,\ u_i,\ u_i^T$ are harmonic, so their linear combinations
 $R_i$ are harmonic.
\qed

An appealing, easily proved, formula is
$$
\lap(p)
= 2 \sum_j^g \dird[R, x_j, h]^T J \dird[R,x_j , h].
$$
Clearly, if the matrix $ \Phi:= \{\phi_{ij} \}_{i,j=1}^{\alpha+2\beta}$ is
positive semidefinite (or equivalently $J$ has nonnegative entries),
$\lap(p)$
will be positive, so then $p$ is subharmonic.
Also we get even degree harmonics
are sums and differences of squares
of harmonics.
It is not clear which differences make $p$ harmonic or subharmonic.

However, we conjecture\\
{\it
A homogeneous symmetric subharmonic polynomial $p$ of
 even degree $d$ is a finite sum
 $$ p=  \sum_i^{finite}  R_i^T R_i + \sum_\ell^{finite} H_\ell $$
for some homogeneous harmonic functions $R_i,H_\ell$ with
$R_i$ of degree $\dd2$ and $H_\ell$ of degree $d$.
 }

At this point, we have finished discussing subharmonics,
and will now turn our full attention to  harmonic polynomials.

\subsection{Uniqueness of Harmonics in Two Variables}
\subsubsection{Polynomials of degree Three and Larger}
\label{sec:main:1a}
\label{sec:allharmpf}

At this point, we have proved that there
are harmonic polynomials of arbitrary degree.
Working in two variables, we will now show that
the polynomials $\re\h^d$ and $\im\h^d$ span all of the harmonics.
This can be a helpful result, which, as yet, we have been
unable to show for any higher number of variables.
 In fact, McAllaster  has found, experimentally,
 that in three variables, the size of the basis
 of harmonic polynomials increases on the order of $d^2$
 (See \cite{harmonic}).

\begin{prop}\label{prop:allharmprop}
Let $\gamma=x_1+ix_2$, and $\mathcal{B}_d=\{\re\gamma^d, \im\gamma^d\}$.
 Then $\mathcal{B}_d$ forms a basis for all harmonic polynomials
which are homogeneous of degree $d$ for any $d>2$.
\end{prop}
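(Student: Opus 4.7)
The plan is to argue by induction on $d \geq 3$, built on the right-neighbor decomposition of Lemma \ref{lem:subToharm} together with the Laplacian product rule of Lemma \ref{lem:prodRulap}. Given a homogeneous harmonic $p$ of degree $d \geq 3$, the $m=1$ right-neighbor representation reads $p = x_1\, p_1 + x_2\, p_2$ (the constant remainder $\Lambda$ vanishes by homogeneity), and by Lemma \ref{lem:subToharm} each $p_i$ is itself harmonic, of degree $d-1$. Applying the product rule to each summand and using $\lap[x_i,h] = 0$, $\lap[p_i,h] = 0$, and $\dird[x_i,x_j,h] = \delta_{ij} h$, the Laplacian collapses to
\begin{equation*}
\lap[p,h] \;=\; 2h\bigl(\dird[p_1,x_1,h] + \dird[p_2,x_2,h]\bigr),
\end{equation*}
so harmonicity of $p$ reduces to the single identity $\dird[p_1,x_1,h] + \dird[p_2,x_2,h] = 0$.

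For the inductive step ($d \geq 4$), the induction hypothesis gives $p_i = \alpha_i \re(\h^{d-1}) + \beta_i \im(\h^{d-1})$. Rewriting the $x_2$-derivatives via Lemma \ref{lem:sym}, the reduced identity becomes
\begin{equation*}
(\alpha_1 + \beta_2)\,\dird[\re(\h^{d-1}),x_1,h] \;+\; (\beta_1 - \alpha_2)\,\dird[\im(\h^{d-1}),x_1,h] \;=\; 0.
\end{equation*}
These two derivatives are linearly independent for $d-1 \geq 2$: if some nontrivial combination of them vanished then $\alpha \re(\h^{d-1}) + \beta \im(\h^{d-1})$ would contain no $x_1$, and inspecting the coefficients of $x_1^{d-1}$ (which appears as $\alpha,\, 0$ in the two summands) and of $x_1^{d-2} x_2$ (which appears as $0,\, \beta$) forces $\alpha = \beta = 0$. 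Hence $\alpha_2 = \beta_1$ and $\beta_2 = -\alpha_1$, and reassembling $p = x_1 p_1 + x_2 p_2$ through the recursions (\ref{eq:first}) and (\ref{eq:last}) produces $p = \alpha_1 \re(\h^d) + \beta_1 \im(\h^d)$, as required.

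The base case $d = 3$ is where I expect the real work: now $p_1, p_2$ are harmonic of degree two, and by Theorem \ref{thm:main}(1b) each has the form $a(x_1^2 - x_2^2) + b\, x_1 x_2 + c\, x_2 x_1$. Substituting both into the reduced identity and matching the coefficients of the distinct monomials $h x_1$, $x_1 h$, $h x_2$, $x_2 h$ yields four linear relations on the six free scalars, and solving pins $p_2$ to $p_1$ in exactly the combination that places $x_1 p_1 + x_2 p_2$ in the span of $\re(\h^3)$ and $\im(\h^3)$. The extra degree-two harmonic $x_1 x_2$ enlarges the parameter family from four to six, so one must verify directly that the four constraints kill the two extraneous directions; this obstacle does not reappear in higher degrees because there the 2-dimensionality of the degree-$(d-1)$ harmonics is already guaranteed by the inductive hypothesis.
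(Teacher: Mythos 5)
Your inductive step is essentially the paper's own argument: decompose $p = x_1 p_1 + x_2 p_2$, invoke Lemma~\ref{lem:subToharm} and the product rule to reduce harmonicity to $\dird[p_1,x_1,h] + \dird[p_2,x_2,h] = 0$, translate the mixed derivatives via Lemma~\ref{lem:sym}, and read off $\alpha_2 = \beta_1$, $\beta_2 = -\alpha_1$ before reassembling through (\ref{eq:first}) and (\ref{eq:last}). The one stylistic difference is how you justify linear independence of $\dird[\re\gamma^{d-1},x_1,h]$ and $\dird[\im\gamma^{d-1},x_1,h]$: you inspect the coefficients of $x_1^{d-1}$ and $x_1^{d-2}x_2$ directly, while the paper observes that the corresponding harmonic combination, having zero $x_1$-derivative, would have to be a multiple of $x_2^{d-1}$, which is not harmonic. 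Both routes are correct.

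Two points remain to be filled in. First, the base case $d=3$ is only a plan in your write-up, and you yourself flag uncertainty about whether the four constraints cut the six free scalars to a two-parameter family. The computation does go through: writing $p_j = a_j(x_1^2-x_2^2)+b_j\,x_1x_2+c_j\,x_2x_1$ and equating coefficients of $hx_1$, $x_1h$, $hx_2$, $x_2h$ in the reduced identity gives $c_2=-a_1$, $b_2=-a_1$, $b_1=a_2$, $c_1=a_2$, and the two remaining directions assemble to $\re\gamma^3$ and $\im\gamma^3$ respectively; but this must actually be written out. (The paper's Lemma~\ref{lem:deg3} takes a blunter route: write the general $8$-coefficient degree-$3$ polynomial, compute its Laplacian, and exhibit the $2$-dimensional nullspace of the resulting $6\times 8$ coefficient matrix. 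Your variant, conditioning on the already-known degree-$2$ harmonics, is a legitimate alternative once executed.) Second, the proposition claims $\mathcal{B}_d$ is a \emph{basis}, so you also owe a proof that $\re\gamma^d$ and $\im\gamma^d$ are linearly independent; the paper devotes a separate paragraph to this, and the same coefficient inspection you use in the inductive step (looking at $x_1^d$ and $x_1^{d-1}x_2$) handles it immediately, but it needs to be stated.
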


To prove the proposition we need two lemmas.
\begin{lemma}\label{lem:deg3}
In degree three, there are two linearly independent homogeneous
harmonic polynomials whose span is all harmonic polynomials which
are homogeneous of degree three.
\end{lemma}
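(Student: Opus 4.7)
The plan is to apply Lemma \ref{lem:subToharm} with $m=1$ to reduce a degree-$3$ harmonic to the right-neighbor form $p = x_1 q_1(x) + x_2 q_2(x)$, where $q_1, q_2$ are degree-$2$ harmonics, and then exploit the product rule of Lemma \ref{lem:prodRulap} to pin down the remaining freedom. Before doing so, it is convenient to classify the degree-$2$ harmonics on the spot: writing $q = B_1 x_1^2 + B_2 x_1 x_2 + B_3 x_2 x_1 + B_4 x_2^2$, a one-line computation gives $\lap[q, h] = 2(B_1 + B_4) h^2$, so the harmonic condition is simply $B_4 = -B_1$. This yields a three-dimensional space with basis $\{x_1^2 - x_2^2,\; x_1 x_2,\; x_2 x_1\}$, which incidentally recovers part (1b) of Theorem \ref{thm:main}.

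Parameterize $q_1 = a_1(x_1^2 - x_2^2) + a_2\, x_1 x_2 + a_3\, x_2 x_1$ and $q_2 = b_1(x_1^2 - x_2^2) + b_2\, x_1 x_2 + b_3\, x_2 x_1$, giving a six-parameter family of candidate polynomials $p$. Applying the Laplacian product rule, the terms $\lap[x_i, h]\, q_i$ and $x_i \lap[q_i, h]$ both vanish, and only the cross term survives:
\[
\lap[p, h] = 2 h \bigl( \dird[q_1, x_1, h] + \dird[q_2, x_2, h] \bigr).
\]
Expanding and collecting coefficients of the four distinct degree-$1$ monomials $h x_1,\, x_1 h,\, h x_2,\, x_2 h$ produces the four linear equations $a_1 + b_3 = 0$, $a_1 + b_2 = 0$, $a_2 - b_1 = 0$, $a_3 - b_1 = 0$. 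These force $b_2 = b_3 = -a_1$ and $a_2 = a_3 = b_1$, leaving the two free parameters $a_1, b_1$.

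Substituting back collapses $p$ to exactly $a_1\, \re(\h^3) + b_1\, \im(\h^3)$, so the space of degree-$3$ harmonics is at most two-dimensional. Since $\re(\h^3)$ and $\im(\h^3)$ are already known to be harmonic (Section \ref{sec:main:1a}) and are visibly linearly independent (only the former contains $x_1^3$), they form a basis. The main obstacle is purely notational bookkeeping in the cross-term expansion; nothing conceptually difficult arises, and this approach has the pleasant feature of producing the basis $\{\re(\h^3),\, \im(\h^3)\}$ organically rather than verifying a pre-chosen guess.
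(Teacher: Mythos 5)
Your proposal is correct, and it takes a genuinely different route from the paper's own proof of Lemma~\ref{lem:deg3}. The paper proves this lemma by brute force: it parameterizes an arbitrary degree-three polynomial with all eight coefficients $a_1,\dots,a_8$, expands the Laplacian, reads off six linear constraints, and exhibits a two-dimensional nullspace of the resulting $6\times 8$ matrix, producing the basis $\{x_2^3 - x_1^2 x_2 - x_2 x_1^2 - x_1 x_2 x_1,\; -x_1^3 + x_1 x_2^2 + x_2^2 x_1 + x_2 x_1 x_2\}$ (which is $\{-\im(\h^3), -\re(\h^3)\}$). You instead invoke Lemma~\ref{lem:subToharm} with $m=1$ and the product rule of Lemma~\ref{lem:prodRulap} to reduce to the degree-$2$ classification, mirroring the machinery the paper develops in Lemma~\ref{lem:lowerdeg} and the inductive step of Proposition~\ref{prop:allharmprop} rather than treating the base case ad hoc. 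The subtlety your approach handles well is that degree~$2$ is anomalous (three-dimensional rather than two), so the base case of the induction cannot be pushed down to degree~$2$ naively; your four linear constraints on the six parameters $a_i, b_i$ eliminate precisely the excess freedom. What each buys: the paper's computation is self-contained and easy to check mechanically; yours is shorter, structurally integrated with the rest of the argument, and produces the basis $\{\re(\h^3), \im(\h^3)\}$ organically rather than by comparing a nullspace basis against a guess. One small remark: since the four constraints you derive are both necessary and sufficient for harmonicity (the Laplacian vanishes exactly when they hold), the final appeal to Section~\ref{sec:main:1a} for harmonicity of $\re(\h^3)$ and $\im(\h^3)$ is not strictly needed, though it does no harm.
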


\begin{lemma}\label{lem:lowerdeg}
Let $\beta(x_1,x_2)$ be harmonic and homogeneous of degree $d$. Then we
may uniquely represent $\beta$ as $\beta(x_1,x_2)=x_1 f(x_1,x_2)+x_2 g(x_1,x_2)$,
where $f$ and $g$ are harmonic and homogeneous of degree $d-1$ and $\ncder{f(x_1,x_2)}{x_1}{h}=-\ncder{g(x_1,x_2)}{x_2}{h}.$
\end{lemma}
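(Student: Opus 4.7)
The plan is to exploit the same leading-letter bookkeeping used in Lemma \ref{lem:subToharm}, but applied on the \emph{left} and with $m=1$.

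First, existence and uniqueness of the decomposition $\beta = x_1 f + x_2 g$ with $f,g$ homogeneous of degree $d-1$ is immediate from the fact that $\beta$ is homogeneous of degree $d\geq 1$: every monomial in $\beta$ begins with either $x_1$ or $x_2$, so one simply collects the monomials beginning with $x_1$ and strips the leading $x_1$ to form $f$, and similarly $g$. Uniqueness holds because an NC monomial has a unique first letter.

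Next, I compute $\lap[\beta,h]$ using the product rule (Lemma \ref{lem:prodRulap}) on each summand. Since $\lap[x_i,h]=0$ and $\dird[x_i,x_j,h]=\delta_{ij}h$, the product rule collapses to
\begin{equation*}
\lap[x_1 f,h] = x_1\,\lap[f,h] + 2\,h\,\dird[f,x_1,h],\qquad
\lap[x_2 g,h] = x_2\,\lap[g,h] + 2\,h\,\dird[g,x_2,h],
\end{equation*}
so harmonicity of $\beta$ gives the identity
\begin{equation*}
0 \;=\; x_1\,\lap[f,h] \,+\, x_2\,\lap[g,h] \,+\, 2h\bigl(\dird[f,x_1,h]+\dird[g,x_2,h]\bigr).
\end{equation*}

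The key step is then to split this identity according to the leading letter of each NC monomial. The monomials of $x_1\,\lap[f,h]$ all begin with $x_1$, those of $x_2\,\lap[g,h]$ all begin with $x_2$, and those of the last summand all begin with $h$; since these three classes are disjoint in the free algebra, each piece must vanish separately. This forces $\lap[f,h]=0$, $\lap[g,h]=0$, and
\begin{equation*}
h\bigl(\dird[f,x_1,h]+\dird[g,x_2,h]\bigr)=0,
\end{equation*}
from which (left-multiplication by $h$ being injective on $\RRx$-polynomials in $h$) we conclude $\dird[f,x_1,h]=-\dird[g,x_2,h]$.

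There is no real obstacle; the only subtle point is the leading-letter decomposition, which is exactly the ``$m$-front'' device used in the proof of Lemma \ref{lem:subToharm}. Everything else is mechanical application of the product rule together with $\lap[x_i,h]=0$ and $\dird[x_i,x_j,h]=\delta_{ij}h$.
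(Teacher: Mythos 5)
Your proof is correct and follows essentially the same route as the paper: the unique $x_1 f + x_2 g$ decomposition, the Laplacian product rule with $\lap[x_i,h]=0$ and $\dird[x_i,x_j,h]=\delta_{ij}h$, and then isolating the cross term. The only cosmetic difference is that the paper extracts the harmonicity of $f$ and $g$ by citing Lemma~\ref{lem:subToharm} (right-neighbor decomposition with $m=1$), whereas you re-derive that fact inline via the leading-letter split of $x_1\lap[f,h] + x_2\lap[g,h] + 2h(\cdots) = 0$; this is exactly the $m=1$ case of the front/back counting argument used to prove Lemma~\ref{lem:subToharm}, so it is a self-contained restatement rather than a genuinely new idea.
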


\begin{proof} \textbf{(Lemma \ref{lem:deg3})} \ \
Every homogeneous polynomial of degree three has the form
$$a_1x_1^3+a_2x_1^2x_2+a_3x_1x_2x_1+a_4x_1x_2^2
+a_5x_2x_1^2+a_6x_2x_1x_2+a_7x_2^2x_1+a_8x_2^3$$
and the Laplacian of this is
$$a_1h^2x_1+a_7h^2x_1+a_2h^2x_2+a_8h^2x_2+a_1x_1h^2+a_4x_1h^2$$
$${}+a_5x_2h^2+a_8x_2h^2+a_1hx_1h+a_6hx_1h+a_3hx_2h+a_8hx_2h$$
$$=(a_1+a_7)h^2x_1+(a_2+a_8)h^2x_2+(a_1+a_4)x_1h^2
+(a_5+a_8)x_2h^2+(a_1+a_6)hx_1h+(a_3+a_8)hx_2h,$$
so if we want the polynomial to be harmonic,
we need each monomial of the Laplacian to be zero, so we need the equations
\(a_1+a_7=0,\ a_2+a_8=0,\ a_1+a_4=0,\ a_5+a_8=0,\ a_1+a_6=0,\ a_3+a_8=0\) to hold.
This amounts to having the vector
$(a_1,a_2,a_3,a_4,a_5,a_6,a_7,a_8)$  in the nullspace of the matrix
$$\left(
\begin{matrix}
1&0&0&0&0&0&1&0\\
0&1&0&0&0&0&0&1\\
1&0&0&1&0&0&0&0\\
0&0&0&0&1&0&0&1\\
1&0&0&0&0&1&0&0\\
0&0&1&0&0&0&0&1\\
\end{matrix}\right),
$$
which has the basis $\{(0,-1,-1,0,-1,0,0,1), (-1,0,0,1,0,1,1,0)\}$,
corresponding to polynomials
$x_2^3-x_1^2x_2-x_2x_1^2-x_1x_2x_1$ and
$-x_1^3+x_1x_2^2+x_2^2x_1+x_2x_1x_2$.
Hence there are exactly two linearly independent harmonic polynomials
which are homogeneous of degree three.
\end{proof}

\begin{proof} \textbf{(Lemma \ref{lem:lowerdeg})}
Now, suppose that we are given a polynomial function $\beta(x_1,x_2)$
 which is harmonic, homogeneous, and of degree $d$.
 Then, every monomial of $\beta$ begins with either $x_1$ or $x_2$,
 so we may uniquely represent $\beta$ by
the neighbor decomposition $\beta(x_1,x_2)=x_1 f(x_1,x_2)+x_2 g(x_1,x_2)$.
Now, by Lemma \ref{lem:subToharm}, we know that $f$ and $g$ are harmonic,
and using the product rule for the Laplacian, we find:
\begin{eqnarray*}
\ncl{\beta}{h}
&=& \ncl{x_1 f(x_1,x_2)+x_2 g(x_1,x_2)}{h}\\
&=&(\ncl{x_1}{h}f(x_1,x_2)+x_1\ncl{f(x_1,x_2)}{h} \
+ \ 2(\ncder{x_1}{x_1}{h}\ncder{f(x_1,x_2)}{x_1}{h}\\
& &+\ncder{x_1}{x_2}{h}\ncder{f(x_1,x_2)}{x_2}{h}))
 +(\ncl{x_2}{h}g(x_1,x_2)+x_2\ncl{g(x_1,x_2)}{h}\\
& &+2(\ncder{x_2}{x_1}{h}\ncder{g(x_1,x_2)}{x_1}{h}
+\ncder{x_2}{x_2}{h}\ncder{g(x_1,x_2)}{x_2}{h}))\\
&=&2h(\ncder{f(x_1,x_2)}{x_1}{h}+\ncder{g(x_1,x_2)}{x_2}{h})\\
\end{eqnarray*}
and since $\beta$ is harmonic, this is zero, which gives:
\[
0=2h(\ncder{f(x_1,x_2)}{x_1}{h}+\ncder{g(x_1,x_2)}{x_2}{h}).
\]
or more specifically,
\begin{equation}\label{eqn:zerocond}
\ncder{f(x_1,x_2)}{x_1}{h}=-\ncder{g(x_1,x_2)}{x_2}{h}.
\end{equation}
\end{proof}

\begin{proof} \textbf{(Prop. \ref{prop:allharmprop})}
First of all, we show that
$\re\gamma^d$ and $\im\gamma^d$
are linearly independent:

Suppose they are linearly dependent. Then $a\re\h^d=b\im\h^d$, where
$a\ne 0$, $b\ne 0$, $a,b\in\RR$. But then, $\h^d=(x_1+ix_2)(R+iI)$,
where $R=\re\h^{d-1}$, $I=\im\h^{d-1}$, so that $a(x_1R-x_2I)=b(x_1I+x_2R)$.
Now, we equate the terms starting with $x_1$ and $x_2$, respectively, to get
that $aR=bI$ and $-aI=bR$. Then, we get that $R=(b/a)I$ and $I=(-b/a)R$ from
the first and second equations, repectively. Puting this together, we get
$R=(b/a)I=(b/a)(-b/a)R$, so that cancelling $R$,
we get $b^2/a^2=-1$, or $b^2=-a^2$,
which can happen only if $a=b=0$, a contradiction.

Now, we are going to prove the proposition by induction.

First of all, Lemma \ref{lem:deg3} begins the induction.
To prove the rest of the proposition,
suppose that for degree $d-1 \geq 3$,
we know that there are exactly two linearly independent polynomials
which are harmonic and homogeneous.
Then for degree $d$,
we suppose that $\beta$ is harmonic and homogeneous. Then
$$
\beta(x_1,x_2)=x_1\varphi(x_1,x_2)+x_2\psi(x_1,x_2),
$$
where $\varphi$ and $\psi$ are both harmonic,
 and homogeneous of degree $d-1$ (Lemma \ref{lem:lowerdeg}).
 Then by the induction hypothesis
\[
\beta(x_1,x_2)=x_1(a_\varphi \re\gamma^{d-1}+b_\varphi
\im\gamma^{d-1})+x_2(a_\psi \re\gamma^{d-1}+b_\psi \im\gamma^{d-1}),
\]
but from Lemma \ref{lem:lowerdeg}, we know that we must have
$\ncder{\varphi}{x_1}{h}+\ncder{\psi}{x_2}{h}=0$
which is equivalent to saying that
\[
a_\varphi\ncder{\re\gamma^{d-1}}{x_1}{h}+b_\varphi
\ncder{\im\gamma^{d-1}}{x_1}{h}\\+a_\psi
\ncder{\re\gamma^{d-1}}{x_2}{h}+b_\psi\ncder{\im\gamma^{d-1}}{x_2}{h}=0.
\]
Now, by applying the identities for the derivatives of
 $\re\gamma^d$ and $\im\gamma^d$ (see Lemma \ref{lem:sym}),
  we get the following:
\[
a_\varphi\ncder{\re\gamma^{d-1}}{x_1}{h}+b_\varphi
\ncder{\im\gamma^{d-1}}{x_1}{h}\\-a_\psi\ncder{\im\gamma^{d-1}}{x_1}{h}
+b_\psi\ncder{\re\gamma^{d-1}}{x_1}{h}=0,
\]
so
\[
(a_\varphi+b_\psi)\ncder{\re\gamma^{d-1}}{x_1}{h}+(b_\varphi-a_\psi)\ncder{\im\gamma^{d-1}}{x_1}{h}=0,
\]
which gives that
\begin{eqnarray*}
 &0=&\ncder{(a_\varphi+b_\psi)\re\gamma^{d-1}
 +(b_\varphi-a_\psi)\im\gamma^{d-1}}{x_1}{h},
\end{eqnarray*}
but if the derivative of a function with respect to $x_1$ is zero,
that function must be a polynomial in $x_2$,
but we know that $\re\gamma^{d-1}$ and $\im\gamma^{d-1}$
are homogeneous of degree $d-1$,
so the function of $x_2$ can only be $cx_2^{d-1}$ for some constant $c$.
That is to say
\[(a_\varphi+b_\psi)\re\gamma^{d-1}+(b_\varphi-a_\psi)\im\gamma^{d-1}
=cx_2^{d-1}.\]
Now since $x_2^{d-1}$ is not harmonic, it is not in the span of
$\re\gamma^{d-1}$ and $\im\gamma^{d-1}$, so $c=0$. Therefore,
\[
0=(a_\varphi+b_\psi)\re\gamma^{d-1}+(b_\varphi-a_\psi)\im\gamma^{d-1},
\]
which means that $0=a_\varphi+b_\psi$ and $0=b_\varphi-a_\psi$, so
\[a_\varphi=-b_\psi\mbox{ and }a_\psi=b_\varphi.\]
Using this, we get
\begin{eqnarray*}
\beta(x_1,x_2)&=&x_1(a_\varphi \re\gamma^{d-1}+b_\varphi \im\gamma^{d-1})
+x_2(a_\psi \re\gamma^{d-1}+b_\psi \im\gamma^{d-1})\\
&=&x_1(a_\varphi\re\gamma^{d-1}+a_\psi\im\gamma^{d-1})
+x_2(a_\psi\re\gamma^{d-1}-a_\varphi\im\gamma^{d-1})\\
&=&a_\varphi x_1\re\gamma^{d-1}+a_\psi x_1\im\gamma^{d-1}
+a_\psi x_2\re\gamma^{d-1}-a_\varphi x_2\im\gamma^{d-1}\\
&=&a_\varphi(x_1\re\gamma^{d-1}-x_2\im\gamma^{d-1})
+a_\psi(x_1\im\gamma^{d-1}+x_2\re\gamma^{d-1})\\
&=&a_\varphi\re\gamma^{d}+a_\psi\im\gamma^{d},
\end{eqnarray*}
which implies that $\beta$ is linearly dependent upon
$\re\gamma^d$ and $\im\gamma^d$. Hence $\re\gamma^d$ and $\im\gamma^d$
form a basis for all of the harmonic polynomials
which are homogeneous of degree $d$.

\end{proof}

\subsubsection{Degree Two Polynomials}
\label{sec:main:2c}\label{sec:main:1b}
The polynomials of degree two are a special case.
This is because some terms of polynomials will vanish when
the Laplacian is taken.
Specifically, if we are given the general polynomial
$$
p=A_1x_1^2+A_2x_2^2+A_3x_1x_2+A_4x_2x_1,
$$
we find that the Laplacian is
$$
\lap(p)= A_1h^2+A_2h^2,
$$
meaning that the polynomial will be harmonic provided
that $A_1+A_2=0$ and subharmonic provided that $A_1+A_2\ge0$.
This is the one case where the harmonic polynomial is not symmetric.
Also, because the subharmonic polynomials are built up of
harmonic polynomials of one half the degree,
this means that it may be possible, in the degree four case alone,
to create nonsymmetric subharmonic polynomials.

\begin{remark}
\label{rem:deg6}
  We now show that this gives a 6 dimensional spanning set for
  the symmetric subharmonics of degree 4; denote these by $\cS_4$.
  We use Proposition \ref{prop:EvenSubharmSumProdHarm} which
  says $\cS_4$ is  spanned by symmetrized products of
  the basis $$ s=: x_1^2- x_2^2, u:= x_1x_2, u^T:= x_2x_1$$.
  Thus we obtain
  $$ s^2, su +u^T s, su^T +us, uu +u^T u^T, u^T u , u u^T.$$
 Note this is consistent with Theorem \ref{thm:main} part (2b)
 which implies the span of the degree 6 symmetric subharmonics
 has dimension.
\end{remark}

\subsection{Homogeneous Harmonics of Odd Degree}

The remainder of this section is not used in the rest of the paper,
but Proposition \ref{prop:oddSub} may be useful in further research
on harmonics in many variables.

What does the argument in Section \ref{sec:EvenSumHarms}
say about harmonics of odd degree?
As we have already stated in \S \ref{sec:main:3}, any subharmonic polynomial
of odd degree is required to be harmonic.

Given NC polynomial $p$ decompose it as
\begin{equation}
\label{eq:defneighboro}
p = \sum_{ |t|= \dm2} \sum_{i=1}^g x^{t} x_{i} p_{t,i}(x)
\end{equation}
and call the polynomial $p_{t,i}(x)$
the {\bf right neighbor of $x^{t} x_{i}$}.
Here we are assuming all terms of $p$
have degree $\dm2$.

Apply $Lap$ to
the right neighbor decomposition (\ref{eq:defneighboro}) of
harmonic $p$ and from the Laplacian  Product Rule   get \\
$0= \ncl{p}{h} =$
\begin{equation}
\label{eq:Lapp1oa}
 \sum_{ |t|= \dm2} \sum_{i=1}^g \ncl{x^{t} x_{i}}{h} p_{t,i}(x)
\end{equation}
\begin{equation}
\label{eq:Lapp2oa}
+ \sum_{ |t|= \dm2} \sum_{i=1}^g x^{t} x_{i} \ncl{p_{t,i}(x)}{h}
\end{equation}
\begin{equation}
\label{eq:Lapp3oa}
+ 2\sum_{ |t|= \dm2} \sum_{i=1}^g \sum_{j=1}^g \ncder{x^{t} x_{i}}{x_j}{h} \ncder{p_{t,i}(x)}{x_j}{h}
\end{equation}
which is
\begin{equation}
\label{eq:Lapp1o1}
\sum_{|t|= \dm2}\sum_{i=1}^g \left(\ncl{x^{t}}{h}x_{i}+x^{t}\ncl{x_{i}}{h}
+2\sum_{j=1}^g \ncder{x^{t}}{x_j}{h} \ncder{x_{i}}{x_j}{h}\right) p_{t,i}(x)
\end{equation}
\begin{equation}
\label{eq:Lapp2o2}
+ \sum_{ |t|= \dm2} \sum_{i=1}^g x^{t} x_{i} \ncl{p_{t,i}(x)}{h}
\end{equation}
\begin{equation}
\label{eq:Lapp3o3}
+ 2\sum_{ |t|= \dm2} \sum_{i=1}^g \sum_{j=1}^g \left(\ncder{x^{t}}{x_j}{h}x_i
+x^{t}\ncder{x_i}{x_j}{h}\right) \ncder{p_{t,i}(x)}{x_j}{h}
\end{equation}
Finally it becomes

\begin{equation}
\label{eq:Lapp1o}
\sum_{|t|= \dm2}\sum_{i=1}^g \left(\ncl{x^{t}}{h}x_{i}+2\ncder{x^{t}}{x_i}{h}h\right) p_{t,i}(x)
\end{equation}
\begin{equation}
\label{eq:Lapp2o}
+ \sum_{ |t|= \dm2} \sum_{i=1}^g x^{t} x_{i} \ncl{p_{t,i}(x)}{h}
\end{equation}
\begin{equation}
\label{eq:Lapp3o}
+2\sum_{ |t|= \dm2} \sum_{i}^g \left(x^{t}h\ncder{p_{t,i}(x)}{x_i}{h}
+  \sum_{j=1}^g \ncder{x^{t}}{x_j}{h}x_i\ncder{p_{t,i}(x)}{x_j}{h}\right),
\end{equation}
which must be 0.
The right half
of each monomial in (\ref{eq:Lapp1o}) contains no $h$'s,
while in $(\ref{eq:Lapp2o})$ and (\ref{eq:Lapp3o}) each right half does;
thus no term of (\ref{eq:Lapp1o}) can be cancelled.
 We conclude (\ref {eq:Lapp1o}) is 0.
 Similarly the right halves in (\ref{eq:Lapp2o}) are the only right halves
 monomials which contain two $h$'s
 and so cannot be cancelled.
 Thus (\ref{eq:Lapp2o}) is 0 and so we get

\centerline{
$\ncl{p_{t,i}}{h}=0$ for each $|t|= \dm2, i=1, \cdots, g$.
}

\noindent
Use
 \[p_{t,i}= \sum_j^k \mu_j(t,i) \h_j\]
as before.
Plug this into the decomposition
\begin{equation}
p = \sum_{ |t|= \dm2,i} x^{t} x_i p_{t,i}(x)
=\sum_{ |t|= \dm2,i}  \sum_j \mu_j(t,i) x^{t} x_i \h_j .
\end{equation}
to get

\[
p
=\sum_j \left(\sum_{|t|= \dm2,i} \mu_j(t,i) x^{t} x_i\right) \h_j
=\sum_j \left(\sum_{i}\left(\sum_{|t|= \dm2} \mu_j(t,i) x^{t}\right)x_i\right) \h_j
\]

\[
=\sum_{j=1}^{k} \left(\sum_{i}p^{j,i}(x)x_i\right) \h_j
\]

Now make a left neighbor decomposition of $p$
which  by the definition $\h_1$ has the form
\[\left(\sum_i p^{1,i}(x) x_i\right) \h_1  +  G\]
where all terms of $G$ are  without $\h_1$ on the right.
The left handed version of Lemma \ref{lem:subToharm}
implies $ Lap( p^{1,i} )=0$ for each $i=1, \cdots, g$.
\vspace{1ex}

We have proved the following:

\begin{prop}
\label{prop:oddSub}
Assume the harmonic polynomials
homogeneous of degree $\dm2$ are the span of $\h_1, \cdots, \h_k$.\\
Assume there is a monomial $w_j$ in $\h_j$ which does not occur
in the other $\h_1,  \cdots, \h_k$.
\\
If $p$ is subharmonic homogeneous of odd degree $d$,
then it is harmonic and has the form
\begin{equation}
p = \sum_{i=1}^g \sum_{m,j=1}^k \phi_{m i j} \h_m x_i \h_j
\end{equation}
where each $\phi_{m i j}$ is a number.
\end{prop}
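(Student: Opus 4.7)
The plan is to follow the template of Proposition \ref{prop:EvenSubharmSumProdHarm}, but with a neighbor decomposition scaled for odd degree. First, I would recall from \S\ref{sec:main:3} that every subharmonic of odd degree is automatically harmonic, so I may assume $\lap[p,h]=0$ from the outset. The next move is to set up a one-more-than-half right neighbor decomposition
\[p=\sum_{|t|=\dm2}\sum_{i=1}^g x^t\, x_i\, p_{t,i}(x),\]
where each $p_{t,i}$ has degree $\dm2$. This is the natural odd-degree analog of the decomposition in Lemma \ref{lem:subToharm}, chosen so that the right neighbors sit at exactly the degree at which harmonics are parameterized by $\h_1,\dots,\h_k$.

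Next, I would apply the Laplacian product rule (Lemma \ref{lem:prodRulap}) to each summand $x^t x_i\, p_{t,i}$, expanding via the first-derivative product rule once more to separate $x^t$ from $x_i$. This produces three groups of terms, distinguishable by how many $h$'s appear in the right half of length $\dm2$: terms with no $h$ on the right (both $h$'s land in the front $x^t x_i$), terms with two $h$'s on the right (both inside $p_{t,i}$), and cross terms with exactly one $h$ on the right. Because monomials in different classes cannot cancel, setting $\lap[p,h]=0$ forces each group to vanish separately. The two-$h$ group is precisely $\sum x^t x_i\, \lap[p_{t,i},h]$, and since distinct $x^t x_i$ are linearly independent, this yields $\lap[p_{t,i},h]=0$ for every $t,i$.

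Given harmonicity of each right neighbor, the spanning hypothesis provides constants $\mu_j(t,i)$ with $p_{t,i}=\sum_{j=1}^k \mu_j(t,i)\,\h_j$. Substituting back and regrouping over $j$ gives
\[p=\sum_{j=1}^k\sum_{i=1}^g \Bigl(\sum_{|t|=\dm2}\mu_j(t,i)x^t\Bigr) x_i\,\h_j=\sum_{j=1}^k\sum_{i=1}^g p^{j,i}(x)\,x_i\,\h_j.\]
To reduce each $p^{j,i}$ to a combination of the $\h_m$'s, I would invoke the mirror-image (left-neighbor) version of the same argument, using the independence assumption that the distinguishing monomial $w_1$ occurs only in $\h_1$: peeling $\h_1$ off on the right via the left neighbor decomposition isolates $\sum_i p^{1,i}(x)x_i$ and forces it to be harmonic, and iterating for each $j$ and then running the right-neighbor step once inside that inner $x_i$-decomposition produces $p^{j,i}=\sum_m \phi_{mij}\h_m$. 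Rewriting the resulting double sum gives the claimed form $p=\sum_{i,m,j}\phi_{mij}\h_m x_i \h_j$.

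The main obstacle will be the $h$-counting bookkeeping that justifies vanishing of each of the three groups independently: one must confirm that in the expansion of the product rule applied to $x^t x_i\, p_{t,i}$, the right-half of length $\dm2$ genuinely separates the three $h$-content classes, and that the isolated $x_i$ sitting between $x^t$ and $p_{t,i}$ does not slip an $h$-term into the wrong class. Once that is verified the rest is linear-algebraic; the hypothesis on the witness monomials $w_j$ is what makes the left-peeling step clean, just as it did in Proposition \ref{prop:EvenSubharmSumProdHarm}.
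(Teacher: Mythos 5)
Your proposal matches the paper's proof essentially step for step: the same one-more-than-half right neighbor decomposition $p=\sum_{|t|=\dm2}\sum_i x^t x_i\, p_{t,i}$, the same Laplacian product-rule expansion sorted by the $h$-content of the right half of length $\dm2$ to deduce $\lap[p_{t,i},h]=0$, the same regrouping into $\sum_{j,i}p^{j,i}(x)\,x_i\,\h_j$, and the same left-neighbor peeling using the witness monomials $w_j$ to conclude that each $p^{j,i}$ is harmonic and hence a combination of the $\h_m$. One small terminological slip: the final step that extracts $p^{j,i}$ from $\sum_i p^{j,i}(x)\,x_i$ is another application of the left-neighbor (not right-neighbor) version of Lemma \ref{lem:subToharm}, since $x_i$ sits on the right; the paper in fact collapses this into a single left-neighbor decomposition at degree $\dm2+1$ using the witness monomial $x_i x^{w_j}$, but your two-stage peeling is equivalent.
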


\begin{ques}
What $\phi_{m i j}$ make it 0?
That is to say, what properties must $\phi_{m i j}$
satisfy in order for $p$ to be harmonic?
\end{ques}
We do a few calculations which might someday help with
this question.
Note the Laplacian of such a $p$ is:
$$
\ncl{p}{h} = \sum_{i=1}^g\sum_{m,j=1}^k \phi_{m i j} \ncl{\h_m x_i \h_j}{h}
$$
$$
= \sum_{i=1}^g\sum_{m,j=1}^k \phi_{m i j} \Bigl( \ncl{\h_m}{h}x_i \h_j
+\h_m\ncl{x_i}{h} \h_j + \h_mx_i  \ncl{\h_j}{h}
$$
$$
{}+ 2\sum_{l=1}^g ( \h_m \ncder{x_i}{x_l}{h} \ncder{\h_j}{x_l}{h}
+ \ncder{\h_m}{x_l}{h} x_i \ncder{\h_j}{x_l}{h}
+ \ncder{\h_m}{x_l}{h} \ncder{x_i}{x_l}{h} \h_j ) \Bigr)
$$
$$
= 2\sum_{i=1}^g\sum_{m,j=1}^k \phi_{m i j} \Bigl(\h_m h \ncder{\h_j}{x_i}{h} + \ncder{\h_m}{x_i}{h} h \h_j
+ \sum_{l=1}^g \ncder{\h_m}{x_l}{h} x_i \ncder{\h_j}{x_l}{h}\Bigr)
$$
As before cancellation cannot occur between terms
with right halves containing two $h$'s, one $h$ and no $h$'s.
Thus, Lap$[p,h]=0$ is equivalent to
$$
\sum\limits_{m=1}^k \gamma_m h
\sum\limits_{i=1}^g D\biggl[\sum\limits_{j=1}^k
\phi_{m i j} \gamma_j, x_i, h \biggl]
=0,
\quad \text{and} \quad
\sum\limits_{j=1}^k \biggl[\sum\limits_{i=1}^g
D \biggl[ \sum\limits_{m=1}^k \phi_{m i j} \gamma_m, x_i,h\biggl]
\biggl] h \gamma_j
=0 \qquad \text{and}
$$
$$
\sum\limits_{\ell=1}^g
\sum\limits_{i=1}^g \sum\limits_{j=1}^k
D\biggl[ \sum_m^k
\phi_{mij} \gamma_m, x_l, h \biggl] x_i D \biggl[ \gamma_j,
x_l,h \biggl]
=0.
$$

\section{Acknowledgments}
All authors were
partially supported by J.W. Helton's grants from
 the NSF and the Ford Motor Co.. and J. A. Hernandez
 was supported by a  McNair Fellowship.

Thanks to Nick Slinglend and John Shopple for help with computations.
Thanks to Professor Roger Howe for very helpful conversations about the
classical commutative analog of the noncommutative results here.
\newpage

\bibliographystyle{alpha}
\bibliography{subharmHHM}

\end{document}